\pgfplotsset{compat=1.18} 
\tikzstyle{startstop} = [circle, draw=black, fill=lightred, align=center]
\tikzstyle{basicblock} = [rectangle, rounded corners, draw=black, fill=lightblue]
\tikzstyle{decision} = [diamond, draw=black, fill=lightviolet, align=center]
\tikzstyle{thickarrow} = [thick, -{Stealth[scale=1]}]
\tikzstyle{arrow} = [-{Stealth[scale=1.2]}]
\newtheorem{theorem}{Theorem}
\definecolor{lightred}{rgb}{1,0.8,0.8}
\definecolor{lightblue}{rgb}{0.8,1,1}
\definecolor{lightviolet}{rgb}{1,0.8,1}
\algrenewcommand\algorithmicdo{}
\algrenewcommand\algorithmicthen{}
\setlist[description]{leftmargin=0pt,labelindent=0pt}
\def\@email#1#2{%
 \endgroup
 \patchcmd{\titleblock@produce}
  {\frontmatter@RRAPformat}
  {\frontmatter@RRAPformat{\produce@RRAP{*#1\href{mailto:#2}{#2}}}\frontmatter@RRAPformat}
  {}{}
}%
\begin{document}

\title{Efficient Boys function evaluation using minimax approximation}
\author{Rasmus Vikhamar-Sandberg}
\email{rasmus.vikhamar-sandberg@uit.no, michal.repisky@uit.no}
\affiliation{Department of Physical and Theoretical Chemistry, Faculty of Natural Sciences, Comenius University, SK-84215 Bratislava, Slovakia}
\affiliation{Hylleraas Center for Quantum Molecular Sciences, Department of Chemistry, UiT The Arctic University of Norway, N-9037 Tromsø , Norway}
\author{Michal Repisky}
\affiliation{Department of Physical and Theoretical Chemistry, Faculty of Natural Sciences, Comenius University, SK-84215 Bratislava, Slovakia}
\affiliation{Hylleraas Center for Quantum Molecular Sciences, Department of Chemistry, UiT The Arctic University of Norway, N-9037 Tromsø , Norway}

\begin{abstract}
We present an algorithm for efficient evaluation of Boys functions $F_0,\dots,F_{k_\mathrm{max}}$ tailored to modern computing architectures, in particular graphical processing units (GPUs), where maximum throughput is high and data movement is costly. The method combines rational minimax approximations with upward and downward recurrence relations. The non-negative real axis is partitioned into three regions, $[0,\infty\rangle = A\cup B\cup C$, where regions $A$ and $B$ are treated using rational minimax approximations and region $C$ by an asymptotic approximation. This formulation avoids lookup tables and irregular memory access, making it well suited hardware with high maximum throughput and low latency. The rational minimax coefficients are generated using the rational Remez algorithm. For a target maximum absolute error of $\varepsilon_\mathrm{tol} = \num{5e-14}$, the corresponding approximation regions and coefficients for Boys functions 
$F_0,\dots,F_{32}$ are provided in Appendix~\ref{coefficients_and_regions}.
\end{abstract}

\maketitle

\section{Introduction}
One of the major computational steps in \textit{ab initio} quantum chemical calculations with Gaussian-type orbitals (GTOs) as basis is the evaluation of two-electron repulsion integrals (ERIs)~\cite{Boys1950,Roothaan1951,Hall1951}
\begin{equation}
   \int d\mathbf{r}_{1} \int d\mathbf{r}_{2} \ \frac{\phi_\mu^\dagger(\mathbf{r}_{1})\phi_\nu(\mathbf{r}_{1})\phi_\kappa^\dagger(\mathbf{r}_{2})\phi_\lambda(\mathbf{r}_{2})}{|\mathbf{r}_{1}-\mathbf{r}_{2}|},
\end{equation}
where cartesian GTOs are functions of the form
\begin{equation}
    \phi_\mu(\mathbf{r}) = C(x-A_x)^{l_x}(y-A_y)^{l_y}(z-A_z)^{l_z}e^{-\alpha(\mathbf{r}-\mathbf{A})^2}.
\end{equation}
centered at $\mathbf{A}=(A_x,A_y,A_z)$ and characterized by positive exponent factor $\alpha$. These integrals as well as other similar electronic integrals can be expressed exactly in terms of Boys functions~\cite{Boys1950}, a family of functions defined by
\begin{equation}
    F_k(x) := \int_0^1 dt \ t^{2k}e^{-xt^2}
\end{equation}
for all $x\in[0,\infty\rangle$ and $k\in\mathbb{N}_0$. For GTOs with low angular momentum $l = l_x+l_y+l_z\in\mathbb{N}_0$, a significant amount of time of the integral evaluation is spent on evaluating Boys functions~\cite{Hamilton1991}.

A widely used strategy for evaluating the Boys function is pre-tabulation, wherein function values are stored on a grid and interpolated as needed~\cite{Reine2011,helgaker2013molecular}. Following the seminal work of Shavitt~\cite{shavitt1963gaussian}, the pre-tabulation methodology was successfully used by McMurchie and Davidson~\cite{MCMURCHIE1978218}, and Obara and Saika~\cite{Obara1986}. Related approaches were later proposed by Gill et al.~\cite{Gill1991}, Ishida~\cite{ishida1996ace}, or Weiss and Ochsenfeld~\cite{Weiss2015}, with the goal to minimize the number of floating-point operations. The method uses little arithmetic, but has an irregular memory access pattern because of random look-ups of the table of Boys function values. Irregular memory access pattern has generally a bad impact on the performance of the code.

In response to this shortcoming, Mazur et al. published an approach where rational minimax approximations were used as a more memory-light method at the expense of being more arithmetic-heavy, thereby eliminating the irregular memory access pattern~\cite{Mazur2016}. Rational minimax approximations are by definition best rational approximations with a fixed numerator and denominator degree. Other methods that require little data are approximation by Chebyshev interpolation, polynomial minimax approximation and Padé approximation. The use of rational minimax approximation can be further motivated by the following argument: Under the assumption that the hardware can only perform the four arithmetic operations $+$,$-$,$\cdot$,$/$, (which is wrong) the only functions we can evaluate are rational functions, and consequently one can show that rational minimax approximation is the method that minimizes the number of arithmetic operations given an error threshold, see Section \ref{walshtable}. Furthermore, rational minimax approximations typically outperforms polynomial minimax approximations when the function has near singularities [\onlinecite{trefethen2019approximation}, chap. 23] despite the fact that division is a computationally expensive operation.

It is also worth mentioning Schaad and Morrell~\cite{Schaad1971} who were early out to generate rational minimax approximations of Boys functions, as well as Laikov~\cite{Laikov2025} who has recently conducted work that is similar but independent to this work with the addition of single instruction, multiple data (SIMD) vectorizibility. Mazur et al. provided rational minimax approximations only for Boys functions up to $F_8$. This is insufficient for routine quantum chemical calculations. They used the implementation of Remez' algorithm provided by the Boost C++ library~\cite{BoostC++} to generate the minimax approximations and reported that a damping factor of the form $e^{ax}$ was necessary, due to convergence issues in the Boost C++ implementation.

Several libraries for generation of rational minimax approximation were tried, AlgRemez~\cite{AlgRemez}, Baryrat~\cite{Baryrat} and Chebfun~\cite{Chebfun}. The Boost C++ library was omitted, as Mazur et al. required a damping factor to achieve convergence. AlgRemez did not converge for Boys functions. Baryrat, which is based on the BRASIL algorithm~\cite{BRASIL} instead of Remez' algorithm, converged only for short intervals as domain of the minimax approximations. In addition, it was impractically slow. In contrast, the Chebfun library provides a robust and efficient implementation of Remez' algorithm. Unfortunately, its routines outputs functions in the form of "chebfuns" which are piecewise Chebyshev interpolations. When extracting the numerator and denominator coefficients of the the chebfun in standard polynomial basis, an essential amount of precision is lost, making it impossible to reach an absolute error threshold of \num{5e-14}.

Therefore, a custom implementation of Remez' algorithm was made. The code is open source and accessed at \url{gitlab.com/rasmusvi/minimax}. It was implemented in quadruple precision since the outputted minimax coefficients should be in double precision for applications in quantum chemistry.

With this implementation, one can successfully generate near-correctly rounded double precision coefficients of rational minimax approximations of Boys functions of any order and error threshold as long as the minimax approximation is not too close to having a \emph{defect} and as long as the minimax coefficients with respect to standard polynomial basis $\left\{1,x,x^2,\dots\right\}$ are well-conditioned. Well-conditioned coefficients mean that a small perturbation in the coefficients gives a small perturbation the polynomial's value. Defect is defined in section \ref{rational_minimax}, and it is not really problem which is also explained there.

This paper presents an algorithm for efficient evaluation of Boys functions targeted to modern hardware and especially GPUs where arithmetic is cheap and data movement is expensive. The algorithm is based on rational minimax approximations as well as an asymptotic approximation for large $x$-values. With the parameters presented in this paper, an absolute error $\leq \num{5e-14}$ for Boys functions up to $F_{32}$ is guaranteed.

The paper is organized as follows: Section \ref{theorysection} presents the theoretical foundation of the method, beginning with an overview of rational minimax approximations and Remez' algorithm, followed by a derivation of the Boys function algorithm. Section \ref{benchmarksection} provides benchmark results, and section \ref{conclusionsection} is conclusion. All essential mathematical derivations, as well as the rational minimax coefficients, are provided in the appendices.

\section{Theory}\label{theorysection}
\subsection{Rational minimax approximation}\label{rational_minimax}
A rational minimax approximation of a continuous function $I\overset{f}{\longrightarrow}\mathbb{R}$ defined on a compact interval $I$ is a rational function $r$ of given maximum numerator degree $n$ and maximum denominator degree $m$ that minimizes the maximum error
\begin{equation}\label{minimaxerror}
    \left\|f-r\right\|_\rho := \max_{x\in I}|\rho(x)(f(x)-r(x))|,
\end{equation}
where $I\overset{\rho}{\longrightarrow}\langle0,\infty\rangle$ is a continuous weight function. For finite intervals, rational minimax approximations always exist and are unique~[\onlinecite{trefethen2019approximation}, chap. 24], that is, for every $(f,n,m,\rho)$ one can assign a unique rational minimax approximation $r_{n,m,\rho}(f)$. For compact infinite intervals of the form $I = [a,\infty]$, existence and uniqueness still holds. Existence is proven in Appendix~\ref{existenceofminimaxoninfiniteinterval}. This can be used to generate minimax approximations on the whole non-negative $x$-axis which leads to a branch-free Boys function evaluation algorithm because the asymptotic approximation is not needed. This was attempted in this work, but was unsuccessful because of high polynomial degree which lead to numerical instability. Laikov seems to have succeeded in achieving this goal by representing polynomials in a different way.~\cite{Laikov2025}

Rational minimax approximations are uniquely described by the equioscillation theorem. To understand the theorem, we need the concept of \emph{defect} of a rational function. Denote $\mathcal{R}_{n,m}$ the set of all rational functions with maximum numerator degree $n$ and maximum denominator degree $m$. The defect of a rational function $r\in\mathcal{R}_{n,m}$ is defined as
\begin{equation}
    d_{n,m}(r) := \min\{n-\deg p,m-\deg q\},
\end{equation}
where $r = \frac{p}{q}$ and the polynomials $p$ and $q$ have no common factor of at least degree $1$. Note that if a rational function has a defect in $\mathcal{R}_{n,m}$, the defect can be removed by simply subtracting $n$ and $m$ by the defect. The equioscillation theorem reads:
\begin{theorem}\label{equioscillationtheorem} \mbox{} \\
    Let $r\in\mathcal{R}_{n,m}$ have zero defect. Then \\
    $r$ is a minimax approximation of $f$ 
    $$\Big\Updownarrow$$
    there are $N := n+m+2$ points $x_1<\dots<x_N$ in $I$ and a sign $\sigma\in\{-1,+1\}$ so that
    \begin{equation}\label{equioscillation}
        \rho(x_i)(f(x_i) - r(x_i)) = (-1)^i \sigma \|f-r\|_\rho
    \end{equation}
    for all $i=1,\dots,N$.
\end{theorem}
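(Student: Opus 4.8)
The statement is a biconditional, so I would prove the two implications separately. Throughout, write $e(x) := \rho(x)\left(f(x)-r(x)\right)$ for the weighted error and $E := \|f-r\|_\rho = \max_{x\in I}|e(x)|$. The case $E=0$ is trivial, since then $f=r$ and any $N$ points satisfy \eqref{equioscillation}, so I assume $E>0$. Because $\rho>0$ on $I$ and a rational function with finite $\rho$-error must be continuous on the compact set $I$, I may write $r=p/q$ in lowest terms with $q>0$ on $I$, and likewise for any competitor.

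First I would treat the direction equioscillation $\Rightarrow$ minimax, which is a sign-counting argument. Suppose \eqref{equioscillation} holds at $x_1<\dots<x_N$ but some $r^\ast=p^\ast/q^\ast\in\mathcal{R}_{n,m}$ has $\|f-r^\ast\|_\rho<E$. At each node,
\begin{equation}
  \rho(x_i)\bigl(r(x_i)-r^\ast(x_i)\bigr) = \rho(x_i)\bigl(f(x_i)-r^\ast(x_i)\bigr) - (-1)^i\sigma E,
\end{equation}
where the first term has modulus strictly below $E$; hence $\sgn\bigl(r(x_i)-r^\ast(x_i)\bigr)=-(-1)^i\sigma$, which alternates in $i$. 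Thus $r-r^\ast=(pq^\ast-p^\ast q)/(qq^\ast)$ changes sign on each of the $N-1=n+m+1$ gaps, producing at least $n+m+1$ zeros of the numerator $pq^\ast-p^\ast q$; but that polynomial has degree $\le n+m$, so it vanishes identically, giving $r=r^\ast$ and a contradiction.

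The converse, minimax $\Rightarrow$ equioscillation, is the hard part. Assume $r$ is minimax with zero defect and, for contradiction, that the largest set of points at which $e$ attains $\pm E$ with strictly alternating signs has size $k\le n+m+1<N$. I would perturb along $\tilde r=(p+\lambda s)/(q+\lambda t)$ with $\deg s\le n$, $\deg t\le m$, which lies in $\mathcal{R}_{n,m}$ and satisfies
\begin{equation}
  \rho\bigl(f-\tilde r\bigr) = e - \lambda\,\rho\,\frac{qs-pt}{q(q+\lambda t)}.
\end{equation}
Here $qs-pt$ is a polynomial of degree $\le n+m$, and the crucial lemma is that as $(s,t)$ range over these degree bounds, $qs-pt$ realizes \emph{every} such polynomial: the linear map $(s,t)\mapsto qs-pt$ has kernel spanned only by $(p,q)$ because $\gcd(p,q)=1$, and zero defect ($\deg p=n$ or $\deg q=m$) forces the kernel scalar to be constant, so the image has dimension $(n+1)+(m+1)-1=n+m+1$, the full space. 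Since the $k\le n+m+1$ alternating extrema split the extreme set into at most $n+m+1$ sign-constant groups separated by at most $n+m$ gaps, I can choose a polynomial $P$ of degree $\le n+m$ with a root in each gap so that $\sgn P(x)=\sgn e(x)$ at every extreme point; writing $P=qs-pt$ and taking $\lambda>0$ small, the subtracted term matches the sign of $e$ near the extremes (lowering $|e|$ there) while being $O(\lambda)$ away from them, where $|e|\le E-\delta$. For small enough $\lambda$ the maximum error drops below $E$, contradicting minimality, so $k\ge N$.

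I expect the main obstacle to be this hard direction, and within it the two intertwined technical points: establishing the surjectivity lemma, where the zero-defect and coprimality hypotheses are exactly what is needed to pin the kernel dimension and hence the dimension count, and controlling the second-order term $O(\lambda^2)$ so that the genuine rational perturbation $\tilde r$, not merely its linearization $r-\lambda P/q^2$, strictly reduces the error. The infinite compact interval $I=[a,\infty]$ can be folded into the same scheme by a compactifying change of variable, with the point at infinity treated like any other node.
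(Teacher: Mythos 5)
Your proof is correct and is essentially the argument the paper itself relies on: the paper gives no proof of Theorem~\ref{equioscillationtheorem}, deferring to [\onlinecite{trefethen2019approximation}, chap.~24] with the remark that generalizing to the $\rho$-weighted error is straightforward, and your two directions --- the sign-counting/zero-counting argument for sufficiency via the numerator $pq^\ast-p^\ast q$ of degree $\le n+m$, and for necessity the perturbation $\tilde r=(p+\lambda s)/(q+\lambda t)$ combined with the surjectivity of $(s,t)\mapsto qs-pt$, where coprimality and zero defect pin the kernel to $\mathrm{span}\{(p,q)\}$ --- are exactly that classical proof, with the positive weight $\rho$ correctly inserted throughout. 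The only loose point is your closing aside on the interval $[a,\infty]$: a compactifying M\"obius substitution does not preserve the type $(n,m)$ of a rational function, so that case does not fold into the same scheme verbatim, but it lies outside the theorem as stated (compact $I$) and does not affect your proof.
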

A proof can be found in [\onlinecite{trefethen2019approximation}, chap. 24]. That proof does not consider $\rho$-weighted error, but generalizing the proof is straightforward. To compute minimax approximations, one can solve eq. (\ref{equioscillation}) with Remez' algorithm.


\subsection{Remez' algorithm}
The rational Remez algorithm iteratively solves the equioscillation equation (\ref{equioscillation}) for the unkown equioscillation nodes $x_1,\dots,x_N\in I$ and rational function $r\in\mathcal{R}_{n,m}$. The algorithm was initially published in 1962, but a modern and robust version is described in [\onlinecite{silviu2018rational}] based on divided differences and a convergence theorem in [\onlinecite{WatsonG.A1980Atan}, theorem 9.14].

The algorithm can be divided into 6 steps.
\begin{description}
    \item[Step 1] Guess equioscillation nodes. According to theorem 9.14 in [\onlinecite{WatsonG.A1980Atan}], if the initial guess of equioscillation sufficiently good, the algorithm will converge as long as the node updates are performed in accordance with point i) and ii) in step 5. Instead of trying to find a sophisticated method of getting a good initial guess it easier and computationally cheap to make a random guess, and if the guess does not result in any pole free-solution in step 2, a new random guess is made.
    \item[Step 2] Solve equioscillation equation with fixed nodes
    \begin{equation}\label{equioscillationequationwithfixednodes}
        \rho\cdot(f-r)(x_n) = (-1)^n E
    \end{equation}
    for $r=\frac{p}{q}\in\mathcal{R}_{n,m}$ and $E\in\mathbb{R}$ using divided differences and two orthonormal polynomial bases to represent $p$ an $q$ respectively. The divided differences separates the unknowns $p$ and $q$ into decoupled equations. The orthonormal basis for $q$ turns eq. (\ref{equioscillationequationwithfixednodes}) into a real symmetrixc eigenvalue problem, resulting in $m+1$ independent solutions $(r_1,E_1),\dots,(r_{m+1},E_{m+1})$.
    \item[Step 3] Check if a pole-free solution $r$ to eq. (\ref{equioscillationequationwithfixednodes}) was found. One can show there is at most one such solution without poles in the interval $I$.~\cite{silviu2018rational} Searching for poles is done by using Sturm's theorem.~\cite{Basu2006} For each solution $r_j\in\{r_1,\dots,r_{m+1}\}$, compute the Sturm sequence of the denominator $q_j$. From the Sturm sequence of $q_j$, one can compute the number of zeros in any interval. Once one finds a $q_j$ without zeros in $I$, step 3 is successful, and one can move to step 4. If all $q_1,\dots,q_{m+1}$ have zeros in $I$, step 3 was unsuccessfull, and one moves back to step 1.
    \item[Step 4] Check for convergence. This is done by picking a threshold $\varepsilon \geq 0$ and testing if
    \begin{equation}
        \|f-r\|_\rho - \left|E^{(i)}\right| \leq \varepsilon.
    \end{equation}
    This convergence criterion is motivated by that if $\lim_{i\rightarrow\infty}\left|E^{(i)}\right| = \|f-r\|_\rho$, then $\lim_{i\rightarrow\infty}r^{(i)} = r^*$, see definition 3.6 and theorem 6.7 in [\onlinecite{Braess1986}]. Furthermore, we will always have $\left|E^{(i)}\right| \leq \|f-r\|_\rho$ by de la Vallée-Poussin's theorem.~\cite{Braess1986}
    \item[Step 5] Update equioscillation nodes $x_n^{(i)} \mapsto x_n^{(i+1)}$ so that
    \begin{enumerate}
        \item[i)] $(-1)^n \sigma \rho\cdot\left(f-r^{(i)}\right)\left(x_n^{(i+1)}\right) \geq \left|E^{(i)}\right|$ for a ${\sigma\in\{-1,1\}}$
        \item[ii)] One of the new nodes is global maximum of ${\rho\cdot\left|f-r^{(i)}\right|}$.
    \end{enumerate}
    This was done by choosing the new nodes to be local maxima of $\rho\cdot\left|f-r^{(i)}\right|$ such that point 1 and 2 holds. Location of local maxima of $\rho\cdot\left|f-r^{(i)}\right|$ was done as follows:
    \begin{enumerate}
        \item Evaluate $\rho\cdot\left|f-r^{(i)}\right|$ on a grid $$\{y_1,\dots,y_K\}\subseteq [a,b]$$ fine enough that we can assume ${\rho\cdot\left|f-r^{(i)}\right|}$ is unimodal between each grid point.
        \item Find local maxima of $\rho\cdot\left|f-r^{(i)}\right|$ on the grid.
        \item For each grid point $y_k\in I$ that is a local maximum, perform golden section search\cite{lu2007practical} on the interval $[y_{k-1},y_{k+1}]$ to find true local maximum, except for end points. As for end points, if $y_k = y_1$, the interval is $[a, y_2]$, and if $y_k = y_K$, the interval is $[y_{K-1}, b]$. Golden section search works if we have unimodality.
    \end{enumerate}
    \item[Step 6] Check if desired error threshold is achievable. Given an error threshold $E_\mathrm{tol} > 0$, abort early if
    \begin{equation}\label{abortearly}
        \min_n \rho\cdot\left|f-r^{(i)}\right|\left(x_n^{(i+1)}\right) > E_\mathrm{tol}.
    \end{equation}
    If inequality (\ref{abortearly}) holds, then de la Vallée-Poussin's theorem~\cite{Braess1986} says it is impossible to reach a minimax error less or equal to $E_\mathrm{tol}$.
\end{description}
The real symmetric matrix in step 2 of the algorithm was diagonalized with a copy of Netlib's LAPACK implementation~\cite{laug} that is available at \url{https://gitlab.com/rasmusvi/qlapack}. The code is compiled with the flag \texttt{-freal-8-real-16} for the GNU compiler \texttt{gfortran} and \texttt{-ipo -double-size 128} for the Intel compiler \texttt{ifx} to turn double precision into quadruple precision. To compile with these flags is a bit risky, but it was tested and seems to work. The flow chart in fig.~\ref{flowchartfigure} demonstrates how the steps are connected.
\begin{widetext}
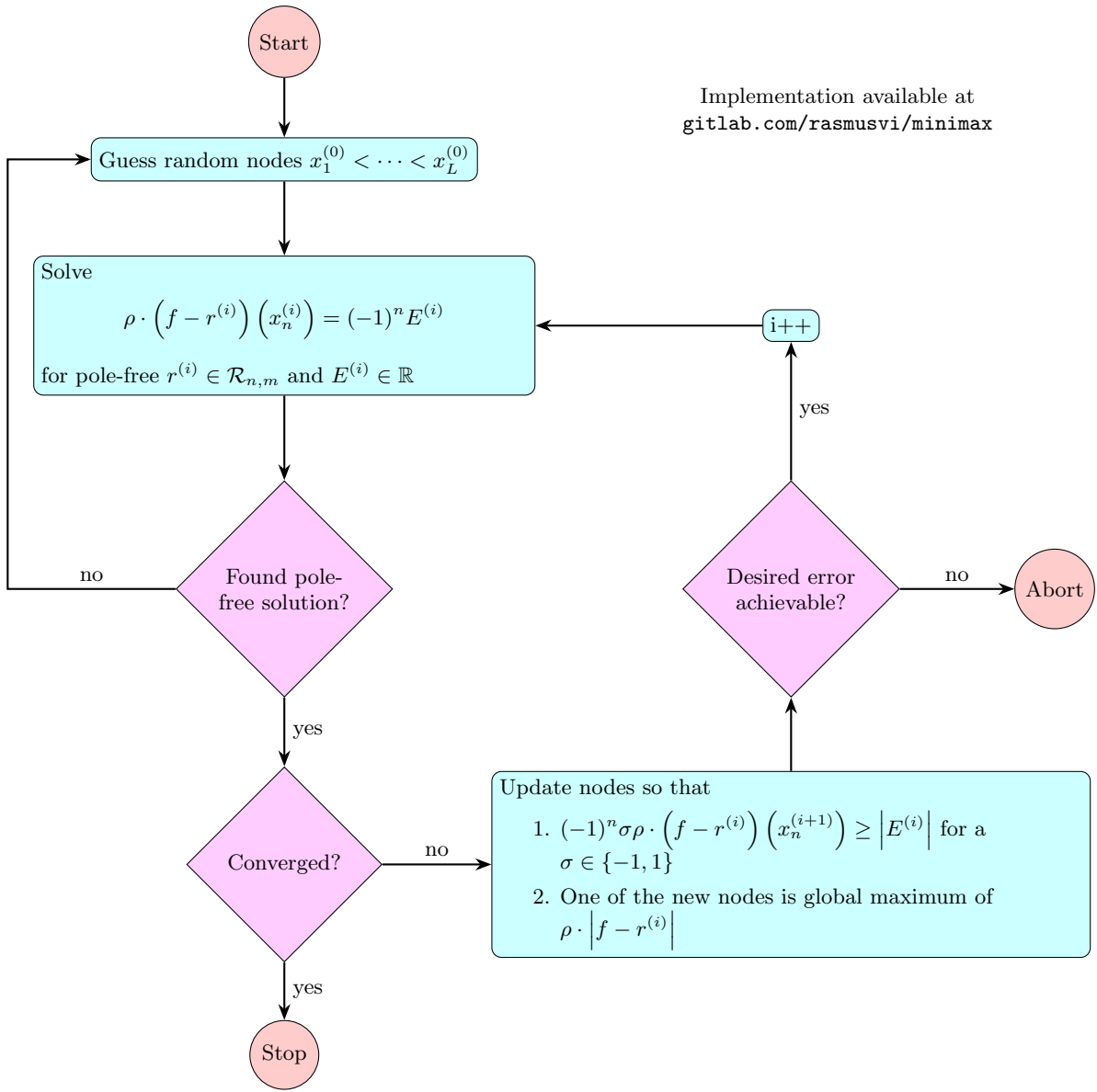
\begin{figure}[H]
\begin{tikzpicture}
    \def\xshift{180}
    
    \node (start)
        [startstop]
        {Start};
    \node (initial guess)
        [basicblock, below of=start, yshift=-20]
        {Guess random nodes $x_1^{(0)}<\dots<x_L^{(0)}$};
    \node (solve)
        [basicblock, below of=initial guess, yshift=-40, text width=200]
        {Solve
        \begin{equation*}
            \rho\cdot\left(f-r^{(i)}\right)\left(x_n^{(i)}\right) = (-1)^n E^{(i)}
        \end{equation*}
        for pole-free $r^{(i)}\in\mathcal{R}_{n,m}$ and $E^{(i)}\in\mathbb{R}$};
    \node (found solution)
        [decision, below of=solve, yshift=-80, text width=60]
        {Found pole-free solution?};
    \node (converged)
        [decision, below of=found solution, yshift=-85, text width=60]
        {Converged?};
    \node (update nodes)
        [basicblock, right of=converged, xshift=\xshift, text width=240]
        {Update nodes so that
        \begin{enumerate}
        \item $(-1)^n \sigma \rho\cdot\left(f-r^{(i)}\right)\left(x_n^{(i+1)}\right) \geq \left|E^{(i)}\right|$ for a $\sigma\in\{-1,1\}$
        \item One of the new nodes is global maximum of $\rho\cdot\left|f-r^{(i)}\right|$
    \end{enumerate}};
    \node (error test)
        [decision, right of=found solution, xshift=\xshift, text width=60]
        {Desired error \\ achievable?};
    \node (abort)
        [startstop, right of=error test, xshift=80]
        {Abort};
    \node (increment)
        [basicblock, right of=solve, xshift=\xshift]
        {i++};
    \node (stop)
        [startstop, below of=converged, yshift=-50]
        {Stop};
    
    \draw[thickarrow] (start) -- (initial guess);
    \draw[thickarrow] (initial guess) -- (solve);
    \draw[thickarrow] (solve) -- (found solution);
    \draw[thickarrow] (found solution) -| node[near start, above] {no} (-4,-4) |- (initial guess);
    \draw[thickarrow] (found solution) -- node[right] {yes} (converged);
    \draw[thickarrow] (converged) -- node[above] {no} (update nodes);
    \draw[thickarrow] (update nodes) -- (error test);
    \draw[thickarrow] (error test) -- node[right] {yes} (increment);
    \draw[thickarrow] (error test) -- node[above] {no} (abort);
    \draw[thickarrow] (increment) -- (solve);
    \draw[thickarrow] (converged) -- node[right] {yes} (stop);
    
    \node (QRcode) at (8,-1) {\shortstack{Implementation available at \\ \url{gitlab.com/rasmusvi/minimax}}};
\end{tikzpicture}
\caption{Flow chart of the implementation Remez' algorithm}
\label{flowchartfigure}
\end{figure}
\end{widetext}

\subsection{Derivation of Boys function algorithm}
Since minimax approximations are designed to work on compact intervals, the non-negative $x$-axis is partitioned into two regions, an asymtotic and a non-asymtotic.
\begin{equation}
    [0,\infty\rangle = \overbrace{[0,x_1\rangle}^{\mathclap{\textnormal{non-asymptotic region}}}\cup\underbrace{[x_1,\infty\rangle}_{\mathclap{\textnormal{asymptotic region}}}.
\end{equation}
In the non-asymptotic region, minimax approximations are used, and in the asymptotic region, the asymptotic approximation [\onlinecite{helgaker2013molecular}, eq. (9.8.9)]
\begin{equation}\label{asymptoticapproximation}
    F_k(x) \approx \frac{\Gamma\left(k+\frac{1}{2}\right)}{2x^{k+\frac{1}{2}}}
\end{equation}
is used, where $\Gamma$ is the gamma function. Since the asymptotic approximation is computationally cheap, the beginning of the asymptotic region $x_1$ is chosen to be as small as possible such that the error in the asymptotic approximation is within the desired error threshold, call it $\varepsilon_\mathrm{tol}$. Since the error in the asymptotic approximation
\begin{equation}
    \left|\frac{\Gamma\left(k+\frac{1}{2}\right)}{2x^{k+\frac{1}{2}}} - F_k(x)\right| = \frac{\Gamma\left(k+\frac{1}{2},x\right)}{2x^{k+\frac{1}{2}}}
\end{equation}
is decreasing with $x$, this can be achieved by solving the equation
\begin{equation}\label{x1}
    \frac{\Gamma\left(k_\mathrm{max}+\frac{1}{2},x_1\right)}{2x_1^{k_\mathrm{max}+\frac{1}{2}}} = \varepsilon_\mathrm{tol}
\end{equation}
for $x_1$. Here $\Gamma(s,x)$ denotes the upper incomplete gamma function. Eq. (\ref{x1}) can be solved by Newton's method.

In integral evaluation with GTO basis, it is commonly needed to compute all Boys functions $F_0,\dots,F_k$ up to some order $k$ at a time. The upwards and downwards recurrence relations\cite{helgaker2013molecular}
\begin{align}
\label{upwardsrecursion}
    F_{k+1}(x) &= \frac{(2k+1)F_k(x) - e^{-x}}{2x} \\
\label{downwardsrecursion}
    F_k(x) &= \frac{2xF_{k+1}(x) + e^{-x}}{2k+1}
\end{align}
are computationally cheap ways to achieve that. Downwards recursion is generally stable. Upwards recursion is unstable for small $x$. The benefit of upwards recursion is that only minimax approximation of $F_0$ is required to evaluate a batch $F_0,\dots,F_k$.

Generating minimax approximations for the non-asymptotic region leads to high degree polynomials with varying signs in the coefficients. Such polynomials require too high precision in the high degree coefficients for precise evaluation. Therefore, likewise with the method of Mazur et al. \cite{Mazur2016}, the non-asymptotic region is partitioned in two,
\begin{equation}\label{regions}
    [0,\infty\rangle = \overbrace{[0,x_0\rangle\cup[x_0,x_1\rangle}^{\mathclap{\textnormal{non-asymptotic region}}}\cup\underbrace{[x_1,\infty\rangle}_{\mathclap{\textnormal{asymptotic region}}},
\end{equation}
to allow for lower degree minimax approximations where varying signs are not an issue. Label
\begin{equation}
    A := [0,x_0\rangle \qquad B := [x_0,x_1\rangle \qquad C := [x_1,\infty\rangle.
\end{equation}

On region $A$, we need minimax approximations $r_{A,0}, \dots, r_{A,k_\mathrm{max}}$ so that for all $0 \leq l \leq k \leq k_\mathrm{max}$
\begin{equation}
    \max_{x\in A} |F_l(x)-y_{A,k,l}(x)| \leq \varepsilon_\mathrm{tol},
\end{equation}
where $y_{A,k,0},\dots,y_{A,k,k}$ are the values obtained from downwards recursion
\begin{equation}
    y_{A,k,l}(x) := \frac{2xy_{A,k,l+1}(x) + e^{-x}}{2l+1}
\end{equation}
starting at $y_{A,k,k} := r_{A,k}$.

On region $B$, we need a single minimax approximation $r_B$ so that for all $0 \leq l \leq k_\mathrm{max}$
\begin{equation}
    \max_{x\in B} |F_l(x)-y_{B,l}(x)| \leq \varepsilon_\mathrm{tol},
\end{equation}
where $y_{B,0},\dots,y_{B,k_\mathrm{max}}$ are the values obtained from upwards recursion
\begin{equation}
    y_{B,l+1}(x) := \frac{(2l+1)y_{B,l}(x) - e^{-x}}{2x}
\end{equation}
starting at $y_{B,0} = r_B$.

To ensure numerical stability of downwards recursion on region $A$ and upwards recursion on region $B$, we pick weight functions $\rho_{A,0}, \dots, \rho_{A,k_\mathrm{max}}$ on $A$ so that $$\max_{x\in A} |\rho_{A,k}\cdot(F_k-r_{A,k})(x)|$$ is minimized and weight function $\rho_B$ on $B$ so that $$\max_{x\in B} |\rho_B\cdot(F_k-r_B)(x)|$$ is minimized, as well as the splitting point $x_0$ in the following way.

The weight function $\rho_{A,k}$ was chosen so that
\begin{gather}
\nonumber
    \max_{x\in A} |\rho_{A,k}\cdot(F_k-r_{A,k})(x)| \leq \varepsilon_\mathrm{tol} \\
    \big\Downarrow \\
\nonumber
    \max_{x\in A} |F_l(x)-y_{A,k,l}(x)| \leq \varepsilon_\mathrm{tol} \quad\forall l=0,\dots,k.
\end{gather}
With round-off errors approximated up to zeroth order in machine epsilon, this is guaranteed by choosing
\begin{equation}
\label{rhoAk}
    \rho_{A,k}(x) := \max_{l=0,\dots,k} \prod_{n=l}^{k-1}\frac{x}{n+\frac{1}{2}},
\end{equation}
where empty product ($l = k$) is as usual defined as $1$. If the error threshold $\varepsilon_\mathrm{tol}$ is close to machine epsilon, one can use the exact formula (\ref{exactrhoAk}). See Appendix \ref{derivation_of_x0_and_rhoAk} for derivation of eq. (\ref{rhoAk}).

The splitting point $x_0$ was chosen to be as small as possible such that upwards recursion is stable on $[x_0,\infty\rangle$ with the choice
\begin{equation}
    \rho_B(x) := 1.
\end{equation}
Up to zeroth order in round-off errors, this results in
\begin{equation}\label{x0}
    x_0 = \max\left\{1, \left(\prod_{k=0}^{k_\mathrm{max}-1}k+\frac{1}{2}\right)^\frac{1}{k_\mathrm{max}}\right\},
\end{equation}
where $k_\mathrm{max}$ is the maximum Boys function order reached by upwards recursion from $F_0$. See Appendix \ref{derivation_of_x0_and_rhoAk} for derivation of eq. (\ref{x0}). Fig. \ref{regions_figure} summarizes the method.

\begin{figure}
    \centering
    \includesvg[scale=0.35]{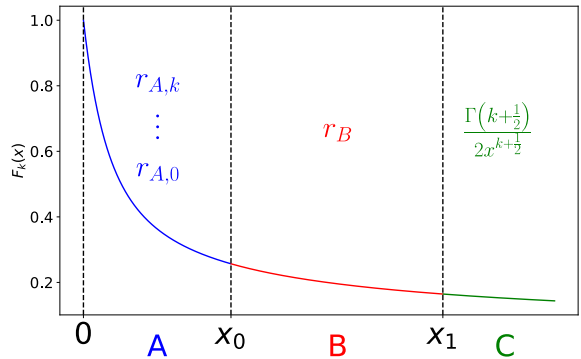}
    \caption{Schematic representation of partitions used for Boys function evaluation.}
    \label{regions_figure}
\end{figure}

\subsection{Choice of minimax from the Walsh table}\label{walshtable}
The Walsh table of $f$ is the table of all minimax approximations of $f$ with maximum numererator degree on the vertical axis and maximum denominator degree on the horizontal axis like shown below.
\begin{equation*}
\begin{array}{|c|c|c|c}
    \vdots          &                 &                 & \iddots \\
    \hline
    r_{2,0,\rho}(f) & r_{2,1,\rho}(f) & r_{2,2,\rho}(f) & \\
    \hline
    r_{1,0,\rho}(f) & r_{1,1,\rho}(f) & r_{1,2,\rho}(f) & \\
    \hline
    r_{0,0,\rho}(f) & r_{0,1,\rho}(f) & r_{0,2,\rho}(f) & \dots\\
    \hline
\end{array}
\end{equation*}

What we want to minimize is the number arithmetic operations given an error threshold $\varepsilon_\mathrm{tol}$. The number of arithmetic operations in the evaluation of a rational function is $2(n+m)$, where $n$ is the numerator degree and $m$ is denominator degree. Given an error threshold, if $r$ is a minimax approximation with minimal $n+m$ among all minimax approximations on the Walsh table with error within the threshold, then $r$ minimizes number of arithmetic operations under the constraint of being within the error threshold.

Therefore, the minimax approximations $r_{A,0},\dots,r_{A,k_\mathrm{max}}$ and $r_B$ were chosen in that manner. This was done by traversing along consecutive anti-diagonals of the Walsh table as shown below
\begin{equation*}
\begin{array}{ccccc}
    \cline{1-1}
    \multicolumn{1}{|c|}{\tikzmarknode{b3}{r_{3,0,\rho}(f)}}  &                                                         &                                                         & \smash{\iddots} \\
    \cline{1-2}
    \multicolumn{1}{|c|}{\tikzmarknode{b2}{r_{2,0,\rho}(f)}}  & \multicolumn{1}{c|}{r_{2,1,\rho}(f)}                    &                                                         & \\
    \cline{1-3}
    \multicolumn{1}{|c|}{\tikzmarknode{b1}{r_{1,0,\rho}(f)}}  & \multicolumn{1}{c|}{r_{1,1,\rho}(f)}                    & \multicolumn{1}{c|}{r_{1,2,\rho}(f)}                    & \\
    \cline{1-4}
    \multicolumn{1}{|c|}{\tikzmarknode{ba0}{r_{0,0,\rho}(f)}} & \multicolumn{1}{c|}{\tikzmarknode{a1}{r_{0,1,\rho}(f)}} & \multicolumn{1}{c|}{\tikzmarknode{a2}{r_{0,2,\rho}(f)}} & \multicolumn{1}{c|}{\tikzmarknode{a3}{r_{0,3,\rho}(f)}} \\
    \cline{1-4}
\end{array}
\begin{tikzpicture}[overlay, remember picture]
    \draw[arrow] (ba0.south east) -- (ba0.north west);
    \draw[arrow] (a1.south east) -- (b1.north west);
    \draw[arrow] (a2.south east) -- (b2.north west);
    \draw[arrow] (a3.south east) -- (b3.north west);
\end{tikzpicture}
\end{equation*}
and then picking the one with smallest error on the first antidiagonal where at least one minimax approximation meet the desired error threshold.

An alternative could be to pick the minimax approximation of small enough error that is nearest the center of the antidiagonal because having $n$ and $m$ closer to being equal gives the evaluation more concurrency in evaluatating the numerator and denominator in parallel. This can for example be utilized by superscalar technology in modern hardware.

\subsection{Reference values of Boys functions}
To run Remez' algorithm, a way of computing reference values of the function one wants to approximate has to be provided. As for Boys functions the following formula\cite{shavitt1963gaussian} was chosen for its numerical stability.
\begin{equation}
    F_k(x) = \frac{e^{-x}}{2} \sum_{l=0}^\infty \frac{x^l}{\prod_{j=0}^l\left(k+j+\frac{1}{2}\right)}.
\end{equation}
It is numerically stable because all terms in the sum have equal sign. To compute it, the sum had to be truncated by an integer $K$ chosen such that the relative truncation error was low enough. One can show that the relative truncation error is bounded as follows.
\begin{equation}\label{errobound}
    \left|\frac{F_k(x) - \frac{e^{-x}}{2} \sum_{l=0}^L \frac{x^l}{\prod_{j=0}^l\left(k+j+\frac{1}{2}\right)}}{F_k(x)}\right| \leq{} \frac{x^{k+L+\frac{3}{2}}}{\Gamma\left(k+L+\frac{3}{2}\right)}.
\end{equation}
$L$ was set to $150$ which gives a relative error $\leq 1.28\cdot10^{-69}$, and consequently the absolute error is also ${\leq 1.28\cdot10^{-69}}$ since $|F_k(x)|\leq 1$. See Appendix~\ref{derivation_error_bound} for derivation of inequality (\ref{errobound}).

\subsection{Boys function algorithm}
The following pseudocode demonstrates the algorithm for evaluating batches of Boys functions.
\begin{algorithm}[H]
\caption{Evaluate Boys functions $F_0,\dots,F_k$}
\begin{algorithmic}
    \Require $k\in\mathbb{N}_0$
    \Require $x\in[0,\infty\rangle$
    \If{$x\in A$}
        \State $F_k = r_{A,k}(x)$
        \For{$l = k-1,\dots,0$}
            \State $F_l = \frac{2xF_{l+1} + e^{-x}}{2l+1}$
        \EndFor
    \ElsIf{$x\in B$}
        \State $F_0 = r_B(x)$
        \For{$l = 0,\dots,k-1$}
            \State $F_{l+1} = \frac{(2l+1)F_l - e^{-x}}{2x}$
        \EndFor
    \Else
        \State $F_0 = \frac{\sqrt{\pi}}{2} \frac{1}{\sqrt{x}}$
        \For{$l = 0,\dots,k-1$}
            \State $F_{l+1} = \frac{2l+1}{2x}F_l$
        \EndFor
    \EndIf
    \State \Return $F_0,\dots,F_k$
\end{algorithmic}
\end{algorithm}

\section{Benchmark}\label{benchmarksection}
It is difficult to compare different algorithms for evaluating Boys functions because all Boys function algorithms have the same scaling with respect to the number of input arguments, namely linear. Which algorithm is faster can depend heavily on the hardware. Therefore, it is \textit{very limited} what conclusions one can draw from a benchmark unless the speed-up is very high. Nevertheless, this method was benchmarked against Ishida's method \cite{ishida1996ace} and a more modern method by Beylkin and Sharma \cite{beylkin2021fast}.

The benchmark is described by Algorithm~\ref{benchmarkalgorithm}. It takes in three randomly initialized vectors $\mathbf{x},\mathbf{y},\mathbf{c}$ and computes an output vector $\mathbf{z}$ given by
\begin{equation}\label{benchmarkequation}
    z_i = \sum_{l=0}^kc_l\sum_jF_l(x_i+x_j)y_j.
\end{equation}
The Boys functions are evaluated on $N^2$ input values of the form $X_{ij} = x_i+x_j$ instead of just the vector $\mathbf{x}$ of length $N$ to avoid having the benchmark being bound by memory traffic in case the hardware has high latency relative to its maximal throughput. In the benchmark, the maximum Boys function order $k$ had to be set equal to 12 since Beylkin and Sharma only provided coefficients to evaluate Boys function $F_0$ and $F_{12}$.

The benchmark code was compiled with the Nvidia compiler nvc version 24.9-0 using OpenACC for parallelization and run on a 32 core Intel Xeon Gold 6338 CPU and an Nvidia A100 GPU. The benchmark evaluated the Boys functions on $N$ points randomly sampled from the interval $[0,30]$ with $N = 2^{14}$ on the Intel Xeon Gold and $N = 2^{19}$ on the Nvidia A100. The interval was chosen to be $[0,30]$ because all methods can be replaced by the asymptotic approximation in the asymptotic region, and it is therefore not interesting to perform a benchmark for $x$-values in that region. The results of the benchmark are shown in Fig.~\ref{CPUbenchmark} and \ref{GPUbenchmark}. The benchmark code is included as supplementary material.

\begin{algorithm}[H]
\caption{Boys function benchmark}\label{benchmarkalgorithm}
\begin{algorithmic}
    \Require $N,k\in\mathbb{N}_0$
    \Require $\mathbf{x}\in[0,\infty\rangle^N$
    \Require $\mathbf{y}\in\mathbb{R}^N$
    \Require $\mathbf{c}\in\mathbb{R}^{k+1}$
    \For{$i=0,\dots,N-1$}
        \State $z_i = 0$
        \For{$j=0,\dots,N-1$}
            \State $x = x_i+x_j$
            \If{downwards recursion for $x$}
                \State $w = c_k F_k(x)$
                \For{$l=k-1,\dots,0$}
                    \State $F_l(x) = \frac{xF_{l+1}(x) + \frac{1}{2}e^{-x}}{l+\frac{1}{2}}$
                    \State $w \mathrel{{+}{=}} c_l F_l(x)$
                \EndFor
            \ElsIf{upwards recursion for $x$}
                \State $w = c_0 F_0(x)$
                \For{$l=1,\dots,k$}
                    \State $F_l(x) = \frac{\left(l-\frac{1}{2}\right)F_{l-1}(x) - \frac{1}{2}e^{-x}}{x}$
                    \State $w \mathrel{{+}{=}} c_l F_l(x)$
                \EndFor
            \EndIf
        \State $z_i \mathrel{{+}{=}} y_j w$
        \EndFor
    \EndFor
    \State \Return $\mathbf{z}$
\end{algorithmic}
\end{algorithm}
 
\begin{figure}
\begin{tikzpicture}[scale=0.9]
\begin{axis}[
    ybar,
    xticklabels={minimax, Beylkin\&Sharma, Ishida},
    xtick={1,2,3},
    ymin=0,
	ylabel=Time (s),
    legend entries={32 core Intel Xeon Gold 6338 CPU 2.00 GHz},
    legend style={at={(0,1)}, anchor=south west, font=\tiny}
    ]
    \addplot coordinates{(1,40.4) (2,81.2) (3,58.5)};
\end{axis}
\end{tikzpicture}
\caption{Benchmark result of Algorithm \ref{benchmarkalgorithm} with $N^2 = \left(2^{14}\right)^2$ random points in $[0,30]$ on 32 core Intel Xeon Gold 6338 CPU 2.00 GHz, "minimax" is the new scheme presented in this paper, Beylkin\&Sharma is the method in [\onlinecite{beylkin2021fast}] and Ishida is the method in [\onlinecite{ishida1996ace}].}
\label{CPUbenchmark}
\end{figure}
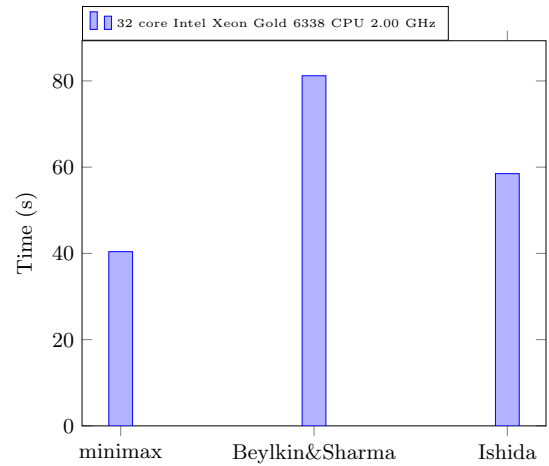

\begin{figure}
\begin{tikzpicture}[scale=0.9]
\begin{axis}[
    ybar,
    xticklabels={minimax, Beylkin\&Sharma, Ishida},
    xtick={1,2,3},
    ymin=0,
	ylabel=Time (s),
    legend entries={Nvidia A100},
    legend style={at={(0,1)}, anchor=south west, font=\tiny}
    ]
    \addplot coordinates{(1,72) (2,87.7) (3,226.7)};
\end{axis}
\end{tikzpicture}
\caption{Benchmark result of Algorithm \ref{benchmarkalgorithm} with $N^2 = \left(2^{19}\right)^2$ random points in $[0,30]$ on Nvidia A100 GPU, "minimax" is the new scheme presented in this paper, Beylkin\&Sharma is the method in [\onlinecite{beylkin2021fast}] and Ishida is the method in [\onlinecite{ishida1996ace}].}
\label{GPUbenchmark}
\end{figure}
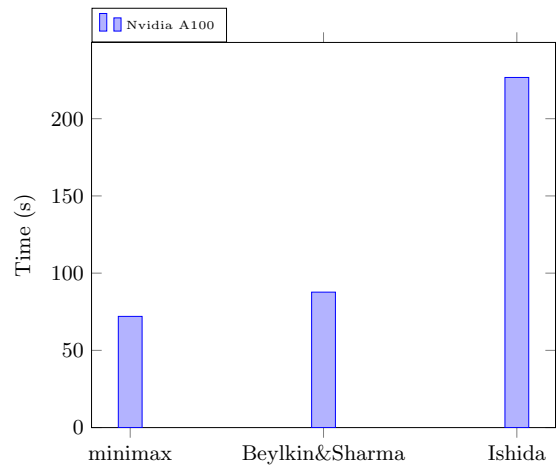

\section{Conlusion}\label{conclusionsection}
In this work, we have presented an efficient algorithm for evaluation of Boys functions $F_{0},\dots,F_{k_\textnormal{max}}$, designed specifically for modern computing architectures where maximum throughput is high and data movement is often a performance bottleneck. The method combines rational minimax approximations with controlled upwards and downwards recursion and an asymptotic treatment for large arguments. By partitioning the non-negative real axis into three regions and applying appropriate approximation strategies in each, the algorithm achieves near machine-precision absolute accuracy while maintaining favorable computational characteristics. Using rational minimax coefficients generated via the rational Remez algorithm, we demonstrate that, for a target tolerance of $\varepsilon_\mathrm{tol} = \num{5e-14}$, the Boys functions up to $F_{32}$ can be evaluated with guaranteed maximum absolute error. All coefficients and region parameters required for practical implementation are provided in the appendices. The proposed approach thus offers a robust, accurate, and GPU-friendly alternative to traditional table-based schemes for Boys function evaluation in large-scale quantum chemical calculations.


\section{Supplementary material}
See the supplementary material for minimax coefficients as new-line-separated list and the source code of the benchmark.

\section{Acknowledgment}
This work was supported by the Research Council of Norway through its Centres
of Excellence scheme (Project No. 262695) and its FRIPRO grant (No. 315822). 
For the computational and data storage resources, we acknowledge the support of 
Sigma2---the National Infrastructure for High Performance Computing and Data Storage in Norway, Grant No.~NN14654K. In addition, the authors acknowledge funding form the Slovak Research and Development Agency (grant No. APVV-22-0488), VEGA No. 1/0670/24, and the EU NextGenerationEU through the Recovery and Resilience Plan for Slovakia under the project Nos. 09I05-03-V02-00034 and 09I01-03-V04-00096.

\appendix

\section{Derivation of error bound of reference values of Boys functions}\label{derivation_error_bound}
Ineq. (\ref{errobound}) can be derived as follows.
\begin{align}
\nonumber
    &\left|\frac{F_n(x) - \frac{e^{-x}}{2} \sum_{k=0}^K \frac{x^k}{\prod_{j=0}^k\left(n+j+\frac{1}{2}\right)}}{F_n(x)}\right| \\
    ={}& \frac{\frac{e^{-x}}{2} \sum_{k=K+1}^\infty \frac{x^k}{\prod_{j=0}^k\left(n+j+\frac{1}{2}\right)}}{F_n(x)} \\
    ={}& \frac{\frac{x^{K+1}}{\prod_{j=0}^K\left(n+j+\frac{1}{2}\right)} F_{n+K+1}(x)}{F_n(x)} \\
    \leq{}& \frac{\frac{x^{K+1}}{\prod_{j=0}^K\left(n+j+\frac{1}{2}\right)} \frac{1}{2(n+K+1)+1}}{\frac{\Gamma\left(n+\frac{1}{2}\right)}{2x^{n+\frac{1}{2}}}} \\
    ={}& \frac{x^{n+K+\frac{3}{2}}}{\Gamma\left(n+K+\frac{3}{2}\right)}.
\end{align}

\section{Derivation of $x_0$ and $\rho_{A,k}$}\label{derivation_of_x0_and_rhoAk}
In these two derivations we need to compute upper bounds to round-off errors in floating point arithmetic as well as errors in evaluation of special functions. For any binary floating point operation $$*_\mathrm{float}\in\{+_\mathrm{float},\cdot_\mathrm{float},-_\mathrm{float},/_\mathrm{float}\}$$ corresponding to an exact binary operation $$*\in\{+,\cdot,-,/\}$$ and two floating point numbers $a$ and $b$ we have 
\begin{equation}\label{floatingpointroundofferror}
    a*_\mathrm{float}b = (a*b)\cdot(1+\delta)
\end{equation}
for a $\delta\in[-\varepsilon_\mathrm{mach},\varepsilon_\mathrm{mach}]$, where $\varepsilon_\mathrm{mach}$ is machine epsilon. Furthermore, for any floating point number $x$ we have
\begin{equation}
\label{exponentialroundofferror}
    \widetilde{\exp}(x) = e^{x}\cdot(1+\gamma)
\end{equation}
for a $\gamma\in[-\varepsilon_\mathrm{exp},\varepsilon_\mathrm{exp}]$, where the tilda means the numerically evaluated value of the function.

Let $\Tilde{F}_l(x)$ be the numerically evaluated value of Boys function $F_l(x)$ at $x\in[0,\infty\rangle$. Define the error
\begin{equation}
    \Delta F_l(x) := F_l(x) - \Tilde{F}_l(x).
\end{equation}

\subsection{Derivation of $x_0$}
We consider upwards recursion of Boys functions from $F_0$ to $F_{k_\mathrm{max}}$. Let $x\in B$. Using eq. (\ref{floatingpointroundofferror}) and (\ref{exponentialroundofferror}), we have for $l=0,\dots,k_\mathrm{max}-1$
\begin{align*}
    &\Tilde{F}_{l+1}(x) \\
    ={}& \left((2l+1) \cdot_\mathrm{float} \Tilde{F}_l(x) -_\mathrm{float} \widetilde{\exp}(x)\right)/_\mathrm{float}(2x) \\
    ={}& \frac{(2l+1)\Tilde{F}_l(x)(1+\delta_1) - e^{-x}(1+\gamma)}{2x}(1+\delta_2)(1+\delta_3) \\
    ={}& \underbrace{\frac{(2l+1)F_l(x) - e^{-x}}{2x}}_{= F_{l+1}(x)} - \frac{(2l+1)\Delta F_l(x)}{2x} \\
    &+ \frac{(2l+1)\Tilde{F}_l(x)}{2x}((1+\delta_1)(1+\delta_2)(1+\delta_3)-1) \\
    &- \frac{(2l+1)e^{-x}}{2x}((1+\eta)(1+\delta_2)(1+\delta_3)-1).
\end{align*}
Move $F_{l+1}(x)$ to the other side. One gets
\begin{multline*}
    \Delta F_{l+1}(x) = \frac{l+\frac{1}{2}}{x} \Delta F_l(x) \\
    - \frac{(2l+1)\Tilde{F}_l(x)}{2x}((1+\delta_1)(1+\delta_2)(1+\delta_3)-1) \\
    + \frac{(2l+1)e^{-x}}{2x}((1+\eta)(1+\delta_2)(1+\delta_3)-1).
\end{multline*}
Since $|\delta_i| \leq \varepsilon_\mathrm{mach}$ and $|\gamma| \leq \varepsilon_\mathrm{exp}$, where $\varepsilon_\mathrm{mach}$ is machine epsilon and $\varepsilon_\mathrm{exp}$ is maximum relative round-off error in evaluation of the exponential function, we get
\begin{equation}
    |\Delta F_{l+1}(x)| \leq \frac{l+\frac{1}{2}}{x} |\Delta F_l(x)| + \delta F_{l+1}^\mathrm{up}(x),
\end{equation}
where
\begin{multline}
    \delta F_{l+1}^\mathrm{up}(x) := \frac{(2l+1)\Tilde{F}_l(x)}{2x}\left((1+\varepsilon_\mathrm{mach})^3-1\right) \\
    \frac{(2l+1)e^{-x}}{2x}\left((1+\varepsilon_\mathrm{exp})(1+\varepsilon_\mathrm{mach})^2-1\right).
\end{multline}
Integrating the recursive inequality (\ref{downwardsrecursiveinequality}) over $l$ and using the requirement
\begin{equation*}
    |\Delta F_0(x)| \leq \varepsilon_\mathrm{tol},
\end{equation*}
one gets
\begin{equation*}
    |\Delta F_l(x)| \leq \left(\prod_{n=0}^{l-1}\frac{n+\frac{1}{2}}{x}\right) \varepsilon_\mathrm{tol} + G_l^\mathrm{up}(x),
\end{equation*}
where
\begin{equation}
    G_l^\mathrm{up}(x) := \sum_{m=1}^l \delta F_m^\mathrm{up}(x) \prod_{n=m}^{l-1}\frac{n+\frac{1}{2}}{x}.
\end{equation}
We achieve the desired error bound
\begin{equation*}
    |\Delta F_l(x)| \leq \varepsilon_\mathrm{tol}
\end{equation*}
if we require
\begin{equation*}
    \left(\prod_{n=0}^{l-1}\frac{n+\frac{1}{2}}{x}\right) \varepsilon_\mathrm{tol} + G_l^\mathrm{up}(x) \leq \varepsilon_\mathrm{tol}
\end{equation*}
for all $l = 0,\dots,k_\mathrm{max}$ and for all $x\in[x_0,\infty\rangle$, which is equivalent to
\begin{align*}
    x &\geq \left(\frac{\varepsilon_\mathrm{tol} \prod_{n=0}^{l-1}n+\frac{1}{2}}{\varepsilon_\mathrm{tol} - G_l^\mathrm{up}(x)}\right)^\frac{1}{l} \\
    &= \left(\prod_{n=0}^{l-1}n+\frac{1}{2}\right)^\frac{1}{l} + O(\varepsilon_\mathrm{tol}) + O(\varepsilon_\mathrm{exp}).
\end{align*}
Up to zeroth order in round-off errors, this is achieved by defining
\begin{align}
\nonumber
    x_0 :={}& \max_{l=0,\dots,k_\mathrm{max}} \left(\prod_{n=0}^{l-1}n+\frac{1}{2}\right)^\frac{1}{l} \\
    ={}& \max\left\{1, \left(\prod_{n=0}^{k_\mathrm{max}-1}n+\frac{1}{2}\right)^\frac{1}{k_\mathrm{max}}\right\}.
\end{align}

\subsection{Derivation of $\rho_{A,k}$}
Let $F_k$ be the Boys function from which we will do downwards recursion to evaluate $F_{k-1}, \dots, F_0$. Let $x\in A$. Using eq. (\ref{floatingpointroundofferror}) and (\ref{exponentialroundofferror}), we have for $l=0,\dots,k-1$
\begin{align*}
    &\Tilde{F}_l(x) \\
    ={}& \left(2x \cdot_\mathrm{float} \Tilde{F}_{l+1}(x) +_\mathrm{float} \widetilde{\exp}(x)\right)/_\mathrm{float}(2l+1) \\
    ={}& \frac{\left(\left(2x\Tilde{F}_{l+1}(x)\right)(1+\delta_1) + e^{-x}(1+\gamma)\right)(1+\delta_2)}{2l+1}(1+\delta_3) \\
    ={}& \underbrace{\frac{2xF_{l+1}(x) + e^{-x}}{2l+1}}_{= F_l(x)} - \frac{2x\Delta F_{l+1}(x)}{2l+1} \\
    &+ \frac{2x\Tilde{F}_{l+1}(x)}{2l+1}((1+\delta_1)(1+\delta_2)(1+\delta_3)-1) \\
    &+ \frac{e^{-x}}{2l+1}((1+\gamma)(1+\delta_2)(1+\delta_3)-1).
\end{align*}
Move $F_l(x)$ to the other side. One gets
\begin{multline*}
    \Delta F_l(x) = \frac{x}{l+\frac{1}{2}} \Delta F_{l+1}(x) \\
    - \frac{2x\Tilde{F}_{l+1}(x)}{2l+1}((1+\delta_1)(1+\delta_2)(1+\delta_3)-1) \\
    - \frac{e^{-x}}{2l+1}((1+\gamma)(1+\delta_2)(1+\delta_3)-1).
\end{multline*}
Since $|\delta_i| \leq \varepsilon_\mathrm{mach}$ and $|\gamma| \leq \varepsilon_\mathrm{exp}$, where $\varepsilon_\mathrm{mach}$ is machine epsilon and $\varepsilon_\mathrm{exp}$ is maximum relative round-off error in evaluation of the exponential function, we get
\begin{equation}\label{downwardsrecursiveinequality}
    |\Delta F_l(x)| \leq \frac{x}{l+\frac{1}{2}} |\Delta F_{l+1}(x)| + \delta F_l^\mathrm{down}(x),
\end{equation}
where
\begin{multline}
    \delta F_l^\mathrm{down}(x) := \frac{2x\Tilde{F}_{l+1}(x)}{2l+1}\left((1+\varepsilon_\mathrm{mach})^3-1\right) \\
    + \frac{e^{-x}}{2l+1}\left((1+\varepsilon_\mathrm{exp})(1+\varepsilon_\mathrm{mach})^2-1\right).
\end{multline}
Integrating the recursive inequality (\ref{downwardsrecursiveinequality}) over $l$ and using the requirement
\begin{equation*}
    \rho_{A,k}(x) |\Delta F_k(x)| \leq \varepsilon_\mathrm{tol},
\end{equation*}
one gets
\begin{equation*}
    |\Delta F_l(x)| \leq \left(\prod_{n=l}^{k-1}\frac{x}{n+\frac{1}{2}}\right)\frac{\varepsilon_\mathrm{tol}}{\rho_{A,k}(x)} + G_{l,k}^\mathrm{down}(x)
\end{equation*}
for all $l=0,\dots,k-1$, where
\begin{equation}
    G_{l,k}^\mathrm{down}(x) := \sum_{m=l}^{k-1} \delta F_m^\mathrm{down}(x) \prod_{n=l}^{m-1}\frac{x}{n+\frac{1}{2}}.
\end{equation}
We achieve the desired error bound
\begin{equation*}
    |\Delta F_l(x)| \leq \varepsilon_\mathrm{tol}
\end{equation*}
if we require
\begin{equation*}
    \left(\prod_{n=l}^{k-1}\frac{x}{n+\frac{1}{2}}\right)\frac{\varepsilon_\mathrm{tol}}{\rho_{A,k}(x)} + G_{l,k}^\mathrm{down}(x) \leq \varepsilon_\mathrm{tol}
\end{equation*}
for all $l=0,\dots,k$, which is equivalent to
\begin{align*}
    \rho_{A,k}(x) &\geq \frac{\varepsilon_\mathrm{tol} \prod_{n=l}^{k-1}\frac{x}{n+\frac{1}{2}}}{\varepsilon_\mathrm{tol} - G_{l,k}^\mathrm{down}(x)} \\
    &= \prod_{n=l}^{k-1}\frac{x}{n+\frac{1}{2}} + O(\varepsilon_\mathrm{mach}) + O(\varepsilon_\mathrm{exp}).
\end{align*}
This is achieved by defining
\begin{align}
\label{exactrhoAk}
    \rho_{A,k}(x) :={}& \max_{l=0,\dots,k} \frac{\varepsilon_\mathrm{tol} \prod_{n=l}^{k-1}\frac{x}{n+\frac{1}{2}}}{\varepsilon_\mathrm{tol} - G_{l,k}^\mathrm{down}(x)} \\
    ={}& \max_{l=0,\dots,k} \prod_{n=l}^{k-1}\frac{x}{n+\frac{1}{2}} + O(\varepsilon_\mathrm{mach}) + O(\varepsilon_\mathrm{exp}).
\end{align}

\section{Proof of existence of minimax approximation for infinite interval}\label{existenceofminimaxoninfiniteinterval}
For finite interval, proof for existence of rational minimax approximation can be found in [\onlinecite{trefethen2019approximation}, chap. 24]. Following is a theorem with proof for the case of infinite interval.
\begin{theorem}[Existence of minimax on infinite interval] \mbox{} \\
    Let $[0,\infty]\overset{f}{\longrightarrow}\mathbb{R}$ and $[0,\infty]\overset{\rho}{\longrightarrow}\langle0,\infty\rangle$ be continuous, $n,m\in\mathbb{N}_0$ and $n\leq m$. Then $f$ has a rational minimax approximation in $\mathcal{R}_{n,m}$ with error weighted by $\rho$.
\end{theorem}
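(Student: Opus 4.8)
The plan is to prove existence by a compactness argument applied to a minimizing sequence, adapting the classical finite-interval proof (e.g. [\onlinecite{trefethen2019approximation}, chap. 24]) to the compactified domain $[0,\infty]$. First I would observe that $[0,\infty]$ is compact and that $f$ and $\rho$, being continuous on it, are bounded, with $\rho$ bounded below by some $\rho_{\mathrm{min}}>0$. The infimum $E^\ast := \inf_{r\in\mathcal{R}_{n,m}}\|f-r\|_\rho$ is therefore finite, since the choice $r=0$ already gives $\|f\|_\rho<\infty$. Any $r\in\mathcal{R}_{n,m}$ with a pole in $[0,\infty]$ has infinite weighted error, so a minimizing sequence $r_k=p_k/q_k$ may be taken to consist of functions that are pole-free on all of $[0,\infty]$, including at the point at infinity. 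Here the hypothesis $n\le m$ enters: it is exactly the condition guaranteeing that a pole-free member of $\mathcal{R}_{n,m}$ extends continuously to $x=\infty$ with a finite value, so that the point at infinity behaves like an ordinary interior point (equivalently, under $t=x/(1+x)$ the approximant becomes a genuine rational function of $t$ on $[0,1]$ with no forced pole at $t=1$, the factor $(1-t)^{m-n}$ being a polynomial precisely when $n\le m$).

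Next I would extract a convergent subsequence. Normalizing each $q_k$ so that its coefficient vector lies on the unit sphere in $\mathbb{R}^{m+1}$, compactness yields a subsequence along which $q_k\to q$ coefficientwise with $q\neq 0$. To control the numerators I would use the uniform bound coming from the minimizing property: since $|\rho(x)(f(x)-r_k(x))|\le E^\ast+1$ for large $k$, one gets $|r_k(x)|\le \|f\|_\infty+(E^\ast+1)/\rho_{\mathrm{min}}=:M$ for all $x$ and all large $k$. As $q$ is a nonzero polynomial it has only finitely many zeros, so I can pick $n+1$ distinct nodes $\xi_0,\dots,\xi_n$ avoiding them; there $|p_k(\xi_i)|=|r_k(\xi_i)|\,|q_k(\xi_i)|\le M\,|q_k(\xi_i)|$ stays bounded, and since a polynomial of degree $\le n$ is a bounded linear function of its values at $n+1$ nodes, the coefficients of $p_k$ are bounded. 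Passing to a further subsequence gives $p_k\to p$ coefficientwise, and the candidate minimizer is $r:=p/q\in\mathcal{R}_{n,m}$.

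The crux is then to show that $r$ is pole-free and attains the infimum, which is where the denominators may degenerate. At any zero $x_0$ of $q$ in $[0,\infty)$, for $x$ near but distinct from $x_0$ one has $r_k(x)\to r(x)$ with $|r_k(x)|\le M$, forcing $|r(x)|\le M$ on a punctured neighborhood; a genuine pole would contradict this, so $p$ must vanish at $x_0$ to at least the order of $q$, and the common factor cancels. The same boundedness argument applied as $x\to\infty$ forces the reduced $r$ to satisfy $\deg(\text{numerator})\le\deg(\text{denominator})$, so $r$ extends continuously to $[0,\infty]$ and still lies in $\mathcal{R}_{n,m}$ after cancellation. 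Finally, on the dense set $\{q\neq 0\}$ the continuity of the weighted error gives $|\rho(x)(f(x)-r(x))|=\lim_k|\rho(x)(f(x)-r_k(x))|\le E^\ast$; by continuity of $\rho(f-r)$ on all of $[0,\infty]$ this bound extends everywhere, whence $\|f-r\|_\rho\le E^\ast$ and therefore $\|f-r\|_\rho=E^\ast$.

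I expect the main obstacle to be precisely this last degeneracy analysis: controlling the limit when the normalized denominators $q_k$ tend to a polynomial $q$ with zeros in $[0,\infty]$, and showing that the accompanying numerators cancel these zeros rather than producing a limit with a spurious pole. Handling the point at infinity on the same footing as the finite points — which is exactly what the assumption $n\le m$ buys — is the conceptual key that lets the compactness argument close, and I would take care to phrase both the pole-removal step and the passage of the error bound to the limit so that the endpoint $x=\infty$ is covered by identical reasoning, for instance by working throughout in the compactified variable $t=x/(1+x)$ if a fully uniform treatment is desired.
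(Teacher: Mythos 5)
Your proposal is correct, but it takes a genuinely different route from the paper. The paper's proof avoids minimizing sequences altogether: it introduces a damping function $d(x)=e^{-x}$ so that sup norms of polynomials are finite on $[0,\infty]$, uses the reverse triangle inequality to show any optimal $r=p/q$ (with $q$ normalized so $\|dq\|_\rho=1$) must satisfy $\|dp\|_\rho\leq 2\|f\|_\rho$, concludes that all candidates lie in a closed bounded set $K\subseteq\mathcal{P}_n\times\mathcal{P}_m$, compact by finite-dimensionality, and then invokes continuity of the map $(p,q)\mapsto\|f-p/q\|_\rho$ into $[0,\infty]$ to get a minimizer by a Weierstrass-type argument. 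You instead run the classical Walsh-style argument: a minimizing sequence $r_k=p_k/q_k$ with denominators normalized on the coefficient unit sphere (equivalent to the paper's damping normalization, by equivalence of norms in finite dimensions), a uniform bound $|r_k|\leq M$ extracted from near-optimality, boundedness of the $p_k$ via interpolation at nodes avoiding the zeros of the limit denominator, and then an explicit degeneracy analysis showing that the limit numerator cancels any zeros of $q$ in $[0,\infty)$ and that the reduced degrees satisfy the bound needed for a continuous extension to $x=\infty$, before passing the error bound to the limit on a dense set. What your route buys is rigor exactly where the paper is thinnest: the error functional is in fact only lower semicontinuous at pairs $(p,q)$ where $q$ vanishes somewhere in $[0,\infty]$, so the paper's bald continuity claim needs either your pole-cancellation argument or the remark that lower semicontinuity suffices to attain a minimum on a compact set, together with the observation that finiteness of the minimal error (it is at most $\|f\|_\rho$, attained within $K$ by $p=0$) rules out poles of the minimizer in $[0,\infty]$; your minimizing-sequence formulation also sidesteps the slightly circular-sounding opening of the paper's proof, which begins by positing a minimax approximation in order to locate it in $K$. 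What the paper's route buys is brevity and the elegant damping device tailored to the infinite interval, packaging the whole existence claim into one compactness statement; your observation about the role of $n\leq m$ at the point at infinity (and the compactifying substitution $t=x/(1+x)$) matches the conceptual content of the paper's treatment of $[0,\infty]$ as a compact interval.
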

\begin{proof} \mbox{} \\
    The idea of the proof is to find a compact set where all minimax approximations must be and use the continuity of the supremum norm to show that there is a minimum in the compact set, which then must be a minimax approximation since the compact set contains all of them. Denote $\mathcal{P}_k$ to be the vector space of real polynomials of maximum degree $k$. A rational function in $\mathcal{R}_{n,m}$ can be represented as an element of $\mathcal{P}_n\times\mathcal{P}_m$. Let $r\in\mathcal{R}_{n,m}$ be a minimax approximation. Let $\|\cdot\|_\rho$ denote the supremum norm defined in eq. (\ref{minimaxerror}). Then by the reverse triangle inequality
    \begin{equation*}
        \|r\|_\rho \leq 2\|f\|_\rho,
    \end{equation*}
    since otherwise
    \begin{align*}
        |\|f\|_\rho - \|r\|_\rho| &\leq \|f-r\|_\rho \\
        \|f\|_\rho &< \|f-r\|_\rho
    \end{align*}
    which contradicts the optimality of $r$. Here, $\|\cdot\|_\rho$ is the $\rho$-weighted supremum norm defined in eq. (\ref{minimaxerror}). To take supremum norm of polynomials, define the damping function
    \begin{equation*}
        d(x) := e^{-x}.
    \end{equation*}
    $r = \frac{p}{q}$ for a $p\in\mathcal{P}_n$ and a $q\in\mathcal{P}_m$, and we can choose $q$ to satisfy
    \begin{equation*}
        \|dq\|_\rho = 1.
    \end{equation*}
    Then
    \begin{align*}
        \left\|\frac{p}{q}\right\|_\rho &\leq 2\|f\|_\rho \\
        \left\|\frac{dp}{dq}\right\|_\rho &\leq 2\|f\|_\rho \\
        \frac{\|dp\|_\rho}{\|dq\|_\rho} &\leq 2\|f\|_\rho \\
        \|dp\|_\rho &\leq 2\|f\|_\rho,
    \end{align*}
    which means all minimax approximations $\frac{p}{q}$ can be represented as an element in the the set
    \begin{equation*}
        K := \{(p,q)\in\mathcal{P}_n\times \mathcal{P}_m \ | \ \|dp\|_\rho \leq 2\|f\|_\rho, \|dq\|_\rho = 1\}.
    \end{equation*}
    $K$ is compact in the norm topology of $\mathcal{P}_n\times\mathcal{P}_m$ since it is closed and bounded, and all norms on a finite-dimensional vector space are equivalent. The map
    \begin{align*}
        K&\longrightarrow[0,\infty] \\
        (p,q)&\mapsto\left\|f-\frac{p}{q}\right\|_\rho
    \end{align*}
    is continuous and hence a minimax approximation exists.
\end{proof}

\bibliography{bibliography}

\begin{thebibliography}{28}%
\makeatletter
\providecommand \@ifxundefined [1]{%
 \@ifx{#1\undefined}
}%
\providecommand \@ifnum [1]{%
 \ifnum #1\expandafter \@firstoftwo
 \else \expandafter \@secondoftwo
 \fi
}%
\providecommand \@ifx [1]{%
 \ifx #1\expandafter \@firstoftwo
 \else \expandafter \@secondoftwo
 \fi
}%
\providecommand \natexlab [1]{#1}%
\providecommand \enquote  [1]{``#1''}%
\providecommand \bibnamefont  [1]{#1}%
\providecommand \bibfnamefont [1]{#1}%
\providecommand \citenamefont [1]{#1}%
\providecommand \href@noop [0]{\@secondoftwo}%
\providecommand \href [0]{\begingroup \@sanitize@url \@href}%
\providecommand \@href[1]{\@@startlink{#1}\@@href}%
\providecommand \@@href[1]{\endgroup#1\@@endlink}%
\providecommand \@sanitize@url [0]{\catcode `\\12\catcode `\$12\catcode `\&12\catcode `\#12\catcode `\^12\catcode `\_12\catcode `\%12\relax}%
\providecommand \@@startlink[1]{}%
\providecommand \@@endlink[0]{}%
\providecommand \url  [0]{\begingroup\@sanitize@url \@url }%
\providecommand \@url [1]{\endgroup\@href {#1}{\urlprefix }}%
\providecommand \urlprefix  [0]{URL }%
\providecommand \Eprint [0]{\href }%
\providecommand \doibase [0]{https://doi.org/}%
\providecommand \selectlanguage [0]{\@gobble}%
\providecommand \bibinfo  [0]{\@secondoftwo}%
\providecommand \bibfield  [0]{\@secondoftwo}%
\providecommand \translation [1]{[#1]}%
\providecommand \BibitemOpen [0]{}%
\providecommand \bibitemStop [0]{}%
\providecommand \bibitemNoStop [0]{.\EOS\space}%
\providecommand \EOS [0]{\spacefactor3000\relax}%
\providecommand \BibitemShut  [1]{\csname bibitem#1\endcsname}%
\let\auto@bib@innerbib\@empty
\bibitem [{\citenamefont {Boys}(1950)}]{Boys1950}%
  \BibitemOpen
  \bibfield  {author} {\bibinfo {author} {\bibfnamefont {S.~F.}\ \bibnamefont {Boys}},\ }\bibfield  {title} {\enquote {\bibinfo {title} {Electronic wave functions - i. a general method of calculation for the stationary states of any molecular system},}\ }\href@noop {} {\bibfield  {journal} {\bibinfo  {journal} {Proc.~R.~Soc.~London,~Ser.~A~2}\ }\textbf {\bibinfo {volume} {200}},\ \bibinfo {pages} {542–554} (\bibinfo {year} {1950})}\BibitemShut {NoStop}%
\bibitem [{\citenamefont {Roothaan}(1951)}]{Roothaan1951}%
  \BibitemOpen
  \bibfield  {author} {\bibinfo {author} {\bibfnamefont {C.~C.~J.}\ \bibnamefont {Roothaan}},\ }\bibfield  {title} {\enquote {\bibinfo {title} {New developments in molecular orbital theory},}\ }\href {https://doi.org/10.1103/RevModPhys.23.69} {\bibfield  {journal} {\bibinfo  {journal} {Rev.~Mod.~Phys.}\ }\textbf {\bibinfo {volume} {23}},\ \bibinfo {pages} {69--89} (\bibinfo {year} {1951})}\BibitemShut {NoStop}%
\bibitem [{\citenamefont {Hall}(1951)}]{Hall1951}%
  \BibitemOpen
  \bibfield  {author} {\bibinfo {author} {\bibfnamefont {G.~G.}\ \bibnamefont {Hall}},\ }\bibfield  {title} {\enquote {\bibinfo {title} {The molecular orbital theory of chemical valency viii. a method of calculating ionization potentials},}\ }\href {https://doi.org/10.1098/rspa.1951.0048} {\bibfield  {journal} {\bibinfo  {journal} {Proc. Royal Soc. A}\ }\textbf {\bibinfo {volume} {205}},\ \bibinfo {pages} {541--552} (\bibinfo {year} {1951})}\BibitemShut {NoStop}%
\bibitem [{\citenamefont {Hamilton}\ and\ \citenamefont {Schaefer}(1991)}]{Hamilton1991}%
  \BibitemOpen
  \bibfield  {author} {\bibinfo {author} {\bibfnamefont {T.~P.}\ \bibnamefont {Hamilton}}\ and\ \bibinfo {author} {\bibfnamefont {H.~F.}\ \bibnamefont {Schaefer}},\ }\bibfield  {title} {\enquote {\bibinfo {title} {New variations in two-electron integral evaluation in the context of direct scf procedures},}\ }\href {https://doi.org/10.1016/0301-0104(91)80126-3} {\bibfield  {journal} {\bibinfo  {journal} {Chem. Phys.}\ }\textbf {\bibinfo {volume} {150}},\ \bibinfo {pages} {163–171} (\bibinfo {year} {1991})}\BibitemShut {NoStop}%
\bibitem [{\citenamefont {Reine}, \citenamefont {Helgaker},\ and\ \citenamefont {Lindh}(2011)}]{Reine2011}%
  \BibitemOpen
  \bibfield  {author} {\bibinfo {author} {\bibfnamefont {S.}~\bibnamefont {Reine}}, \bibinfo {author} {\bibfnamefont {T.}~\bibnamefont {Helgaker}},\ and\ \bibinfo {author} {\bibfnamefont {R.}~\bibnamefont {Lindh}},\ }\bibfield  {title} {\enquote {\bibinfo {title} {Multi‐electron integrals},}\ }\href {https://doi.org/10.1002/wcms.78} {\bibfield  {journal} {\bibinfo  {journal} {WIREs Comput. Mol. Sci.}\ }\textbf {\bibinfo {volume} {2}},\ \bibinfo {pages} {290--303} (\bibinfo {year} {2011})}\BibitemShut {NoStop}%
\bibitem [{\citenamefont {Helgaker}, \citenamefont {Jørgensen},\ and\ \citenamefont {Olsen}(2013)}]{helgaker2013molecular}%
  \BibitemOpen
  \bibfield  {author} {\bibinfo {author} {\bibfnamefont {T.}~\bibnamefont {Helgaker}}, \bibinfo {author} {\bibfnamefont {P.}~\bibnamefont {Jørgensen}},\ and\ \bibinfo {author} {\bibfnamefont {J.}~\bibnamefont {Olsen}},\ }\href@noop {} {\emph {\bibinfo {title} {Molecular electronic-structure theory}}}\ (\bibinfo  {publisher} {John Wiley \& Sons},\ \bibinfo {year} {2013})\BibitemShut {NoStop}%
\bibitem [{\citenamefont {Shavitt}(1963)}]{shavitt1963gaussian}%
  \BibitemOpen
  \bibfield  {author} {\bibinfo {author} {\bibfnamefont {I.}~\bibnamefont {Shavitt}},\ }\bibfield  {title} {\enquote {\bibinfo {title} {The gaussian function in calculations of statistical mechanics and quantum mechanics},}\ }\href@noop {} {\bibfield  {journal} {\bibinfo  {journal} {Methods of computational physics}\ }\textbf {\bibinfo {volume} {2}},\ \bibinfo {pages} {1--45} (\bibinfo {year} {1963})}\BibitemShut {NoStop}%
\bibitem [{\citenamefont {McMurchie}\ and\ \citenamefont {Davidson}(1978)}]{MCMURCHIE1978218}%
  \BibitemOpen
  \bibfield  {author} {\bibinfo {author} {\bibfnamefont {L.~E.}\ \bibnamefont {McMurchie}}\ and\ \bibinfo {author} {\bibfnamefont {E.~R.}\ \bibnamefont {Davidson}},\ }\bibfield  {title} {\enquote {\bibinfo {title} {One- and two-electron integrals over cartesian gaussian functions},}\ }\href {https://doi.org/https://doi.org/10.1016/0021-9991(78)90092-X} {\bibfield  {journal} {\bibinfo  {journal} {J. Comp. Phys.}\ }\textbf {\bibinfo {volume} {26}},\ \bibinfo {pages} {218--231} (\bibinfo {year} {1978})}\BibitemShut {NoStop}%
\bibitem [{\citenamefont {Obara}\ and\ \citenamefont {Saika}(1986)}]{Obara1986}%
  \BibitemOpen
  \bibfield  {author} {\bibinfo {author} {\bibfnamefont {S.}~\bibnamefont {Obara}}\ and\ \bibinfo {author} {\bibfnamefont {A.}~\bibnamefont {Saika}},\ }\bibfield  {title} {\enquote {\bibinfo {title} {Efficient recursive computation of molecular integrals over cartesian gaussian functions},}\ }\href {https://doi.org/10.1063/1.450106} {\bibfield  {journal} {\bibinfo  {journal} {J. Chem. Phys.}\ }\textbf {\bibinfo {volume} {84}},\ \bibinfo {pages} {3963--3974} (\bibinfo {year} {1986})}\BibitemShut {NoStop}%
\bibitem [{\citenamefont {Gill}, \citenamefont {Johnson},\ and\ \citenamefont {Pople}(1991)}]{Gill1991}%
  \BibitemOpen
  \bibfield  {author} {\bibinfo {author} {\bibfnamefont {P.~M.~W.}\ \bibnamefont {Gill}}, \bibinfo {author} {\bibfnamefont {B.~G.}\ \bibnamefont {Johnson}},\ and\ \bibinfo {author} {\bibfnamefont {J.~A.}\ \bibnamefont {Pople}},\ }\bibfield  {title} {\enquote {\bibinfo {title} {Two‐electron repulsion integrals over gaussian s functions},}\ }\href {https://doi.org/10.1002/qua.560400604} {\bibfield  {journal} {\bibinfo  {journal} {Int. J. Quantum Chem.}\ }\textbf {\bibinfo {volume} {40}},\ \bibinfo {pages} {745–752} (\bibinfo {year} {1991})}\BibitemShut {NoStop}%
\bibitem [{\citenamefont {Ishida}(1996)}]{ishida1996ace}%
  \BibitemOpen
  \bibfield  {author} {\bibinfo {author} {\bibfnamefont {K.}~\bibnamefont {Ishida}},\ }\bibfield  {title} {\enquote {\bibinfo {title} {Ace algorithm for the rapid evaluation of the electron-repulsion integral over gaussian-type orbitals},}\ }\href@noop {} {\bibfield  {journal} {\bibinfo  {journal} {International journal of quantum chemistry}\ }\textbf {\bibinfo {volume} {59}},\ \bibinfo {pages} {209--218} (\bibinfo {year} {1996})}\BibitemShut {NoStop}%
\bibitem [{\citenamefont {Weiss}\ and\ \citenamefont {Ochsenfeld}(2015)}]{Weiss2015}%
  \BibitemOpen
  \bibfield  {author} {\bibinfo {author} {\bibfnamefont {A.~K.~H.}\ \bibnamefont {Weiss}}\ and\ \bibinfo {author} {\bibfnamefont {C.}~\bibnamefont {Ochsenfeld}},\ }\bibfield  {title} {\enquote {\bibinfo {title} {A rigorous and optimized strategy for the evaluation of the boys function kernel in molecular electronic structure theory},}\ }\href {https://doi.org/10.1002/jcc.23935} {\bibfield  {journal} {\bibinfo  {journal} {J}\ }\textbf {\bibinfo {volume} {36}},\ \bibinfo {pages} {1390--1398} (\bibinfo {year} {2015})}\BibitemShut {NoStop}%
\bibitem [{\citenamefont {Mazur}, \citenamefont {Makowski},\ and\ \citenamefont {Łazarski}(2016)}]{Mazur2016}%
  \BibitemOpen
  \bibfield  {author} {\bibinfo {author} {\bibfnamefont {G.}~\bibnamefont {Mazur}}, \bibinfo {author} {\bibfnamefont {M.}~\bibnamefont {Makowski}},\ and\ \bibinfo {author} {\bibfnamefont {R.}~\bibnamefont {Łazarski}},\ }\bibfield  {title} {\enquote {\bibinfo {title} {Boys function evaluation on graphical processing units},}\ }\href {https://doi.org/10.1007/s10910-016-0668-x} {\bibfield  {journal} {\bibinfo  {journal} {J. Math. Chem.}\ }\textbf {\bibinfo {volume} {54}},\ \bibinfo {pages} {2022–2047} (\bibinfo {year} {2016})}\BibitemShut {NoStop}%
\bibitem [{\citenamefont {Trefethen}(2019)}]{trefethen2019approximation}%
  \BibitemOpen
  \bibfield  {author} {\bibinfo {author} {\bibfnamefont {L.~N.}\ \bibnamefont {Trefethen}},\ }\href@noop {} {\emph {\bibinfo {title} {Approximation theory and approximation practice, extended edition}}}\ (\bibinfo  {publisher} {SIAM},\ \bibinfo {year} {2019})\BibitemShut {NoStop}%
\bibitem [{\citenamefont {Schaad}\ and\ \citenamefont {Morrell}(1971)}]{Schaad1971}%
  \BibitemOpen
  \bibfield  {author} {\bibinfo {author} {\bibfnamefont {L.~J.}\ \bibnamefont {Schaad}}\ and\ \bibinfo {author} {\bibfnamefont {G.~O.}\ \bibnamefont {Morrell}},\ }\bibfield  {title} {\enquote {\bibinfo {title} {Approximations for the functions fm(z) occurring in molecular calculations with a gaussian basis},}\ }\href {https://doi.org/10.1063/1.1675126} {\bibfield  {journal} {\bibinfo  {journal} {J. Chem. Phys.}\ }\textbf {\bibinfo {volume} {54}},\ \bibinfo {pages} {1965–1967} (\bibinfo {year} {1971})}\BibitemShut {NoStop}%
\bibitem [{\citenamefont {Laikov}(2025)}]{Laikov2025}%
  \BibitemOpen
  \bibfield  {author} {\bibinfo {author} {\bibfnamefont {D.~N.}\ \bibnamefont {Laikov}},\ }\bibfield  {title} {\enquote {\bibinfo {title} {Global approximation to the boys functions for vectorized computation},}\ }\href {https://doi.org/10.1063/5.0275063} {\bibfield  {journal} {\bibinfo  {journal} {The Journal of Chemical Physics}\ }\textbf {\bibinfo {volume} {163}} (\bibinfo {year} {2025}),\ 10.1063/5.0275063}\BibitemShut {NoStop}%
\bibitem [{Boo()}]{BoostC++}%
  \BibitemOpen
  \href@noop {} {\enquote {\bibinfo {title} {The remez method},}\ }\bibinfo {howpublished} {\url{https://live.boost.org/doc/libs/1_54_0/libs/math/doc/html/math_toolkit/remez.html}},\ \bibinfo {note} {accessed: 26.02.2024}\BibitemShut {NoStop}%
\bibitem [{\citenamefont {Clark}\ and\ \citenamefont {Kennedy}(2005)}]{AlgRemez}%
  \BibitemOpen
  \bibfield  {author} {\bibinfo {author} {\bibfnamefont {M.}~\bibnamefont {Clark}}\ and\ \bibinfo {author} {\bibfnamefont {A.}~\bibnamefont {Kennedy}},\ }\href@noop {} {\enquote {\bibinfo {title} {Algremez},}\ }\bibinfo {howpublished} {\url{https://github.com/mikeaclark/AlgRemez}} (\bibinfo {year} {2005})\BibitemShut {NoStop}%
\bibitem [{Bar()}]{Baryrat}%
  \BibitemOpen
  \href@noop {} {\enquote {\bibinfo {title} {Baryrat},}\ }\bibinfo {howpublished} {\url{https://baryrat.readthedocs.io/en/latest}}\BibitemShut {NoStop}%
\bibitem [{Che()}]{Chebfun}%
  \BibitemOpen
  \href@noop {} {\enquote {\bibinfo {title} {Chebfun},}\ }\bibinfo {howpublished} {\url{https://www.chebfun.org}}\BibitemShut {NoStop}%
\bibitem [{\citenamefont {Hofreither}(2021)}]{BRASIL}%
  \BibitemOpen
  \bibfield  {author} {\bibinfo {author} {\bibfnamefont {C.}~\bibnamefont {Hofreither}},\ }\bibfield  {title} {\enquote {\bibinfo {title} {An algorithm for best rational approximation based on barycentric rational interpolation},}\ }\href {https://doi.org/10.1007/s11075-020-01042-0} {\bibfield  {journal} {\bibinfo  {journal} {Numerical Algorithms}\ }\textbf {\bibinfo {volume} {88}},\ \bibinfo {pages} {365–388} (\bibinfo {year} {2021})}\BibitemShut {NoStop}%
\bibitem [{\citenamefont {Silviu-Ioan}\ \emph {et~al.}(2018)\citenamefont {Silviu-Ioan}, \citenamefont {Yuji}, \citenamefont {Trefethen} \emph {et~al.}}]{silviu2018rational}%
  \BibitemOpen
  \bibfield  {author} {\bibinfo {author} {\bibfnamefont {F.}~\bibnamefont {Silviu-Ioan}}, \bibinfo {author} {\bibfnamefont {N.}~\bibnamefont {Yuji}}, \bibinfo {author} {\bibfnamefont {L.~N.}\ \bibnamefont {Trefethen}}, \emph {et~al.},\ }\bibfield  {title} {\enquote {\bibinfo {title} {Rational minimax approximation via adaptive barycentric representations [j]},}\ }\href@noop {} {\bibfield  {journal} {\bibinfo  {journal} {SIAM J Sci Comput}\ }\textbf {\bibinfo {volume} {40}},\ \bibinfo {pages} {2427--2455} (\bibinfo {year} {2018})}\BibitemShut {NoStop}%
\bibitem [{\citenamefont {Watson}(1980)}]{WatsonG.A1980Atan}%
  \BibitemOpen
  \bibfield  {author} {\bibinfo {author} {\bibfnamefont {G.~A.}\ \bibnamefont {Watson}},\ }\href@noop {} {\emph {\bibinfo {title} {Approximation theory and numerical methods}}}\ (\bibinfo  {publisher} {Wiley},\ \bibinfo {address} {Chichester},\ \bibinfo {year} {1980})\BibitemShut {NoStop}%
\bibitem [{\citenamefont {Basu}, \citenamefont {Pollack},\ and\ \citenamefont {Roy}(2006)}]{Basu2006}%
  \BibitemOpen
  \bibfield  {author} {\bibinfo {author} {\bibfnamefont {S.}~\bibnamefont {Basu}}, \bibinfo {author} {\bibfnamefont {R.}~\bibnamefont {Pollack}},\ and\ \bibinfo {author} {\bibfnamefont {M.-F.}\ \bibnamefont {Roy}},\ }\href {https://doi.org/10.1007/3-540-33099-2} {\emph {\bibinfo {title} {Algorithms in Real Algebraic Geometry}}}\ (\bibinfo  {publisher} {Springer Berlin Heidelberg},\ \bibinfo {year} {2006})\BibitemShut {NoStop}%
\bibitem [{\citenamefont {Braess}(1986)}]{Braess1986}%
  \BibitemOpen
  \bibfield  {author} {\bibinfo {author} {\bibfnamefont {D.}~\bibnamefont {Braess}},\ }\href {https://doi.org/10.1007/978-3-642-61609-9} {\emph {\bibinfo {title} {Nonlinear Approximation Theory}}}\ (\bibinfo  {publisher} {Springer Berlin Heidelberg},\ \bibinfo {year} {1986})\BibitemShut {NoStop}%
\bibitem [{\citenamefont {Lu}(2007)}]{lu2007practical}%
  \BibitemOpen
  \bibfield  {author} {\bibinfo {author} {\bibfnamefont {A.~A. W.-S.}\ \bibnamefont {Lu}},\ }\href@noop {} {\enquote {\bibinfo {title} {Practical optimization algorithms and engineering applications},}\ } (\bibinfo {year} {2007})\BibitemShut {NoStop}%
\bibitem [{\citenamefont {Anderson}\ \emph {et~al.}(1999)\citenamefont {Anderson}, \citenamefont {Bai}, \citenamefont {Bischof}, \citenamefont {Blackford}, \citenamefont {Demmel}, \citenamefont {Dongarra}, \citenamefont {Du~Croz}, \citenamefont {Greenbaum}, \citenamefont {Hammarling}, \citenamefont {McKenney},\ and\ \citenamefont {Sorensen}}]{laug}%
  \BibitemOpen
  \bibfield  {author} {\bibinfo {author} {\bibfnamefont {E.}~\bibnamefont {Anderson}}, \bibinfo {author} {\bibfnamefont {Z.}~\bibnamefont {Bai}}, \bibinfo {author} {\bibfnamefont {C.}~\bibnamefont {Bischof}}, \bibinfo {author} {\bibfnamefont {S.}~\bibnamefont {Blackford}}, \bibinfo {author} {\bibfnamefont {J.}~\bibnamefont {Demmel}}, \bibinfo {author} {\bibfnamefont {J.}~\bibnamefont {Dongarra}}, \bibinfo {author} {\bibfnamefont {J.}~\bibnamefont {Du~Croz}}, \bibinfo {author} {\bibfnamefont {A.}~\bibnamefont {Greenbaum}}, \bibinfo {author} {\bibfnamefont {S.}~\bibnamefont {Hammarling}}, \bibinfo {author} {\bibfnamefont {A.}~\bibnamefont {McKenney}},\ and\ \bibinfo {author} {\bibfnamefont {D.}~\bibnamefont {Sorensen}},\ }\href@noop {} {\emph {\bibinfo {title} {{LAPACK} Users' Guide}}},\ \bibinfo {edition} {3rd}\ ed.\ (\bibinfo  {publisher} {Society for Industrial and Applied Mathematics},\ \bibinfo {address} {Philadelphia, PA},\ \bibinfo {year} {1999})\BibitemShut {NoStop}%
\bibitem [{\citenamefont {Beylkin}\ and\ \citenamefont {Sharma}(2021)}]{beylkin2021fast}%
  \BibitemOpen
  \bibfield  {author} {\bibinfo {author} {\bibfnamefont {G.}~\bibnamefont {Beylkin}}\ and\ \bibinfo {author} {\bibfnamefont {S.}~\bibnamefont {Sharma}},\ }\bibfield  {title} {\enquote {\bibinfo {title} {A fast algorithm for computing the boys function},}\ }\href@noop {} {\bibfield  {journal} {\bibinfo  {journal} {The Journal of Chemical Physics}\ }\textbf {\bibinfo {volume} {155}} (\bibinfo {year} {2021})}\BibitemShut {NoStop}%
\end{thebibliography}%

\section{Coefficients of minimax approximations of Boys functions}\label{coefficients_and_regions}
For an error threshold of \num{5e-14}, the separation points between region $A$, $B$ and $C$ defined in eq. (\ref{x0}) and (\ref{x1}) are given in eq. (\ref{value_of_x0}) and (\ref{vaklue_of_x1}), and the coefficients of the rational minimax approximations are shown in table \ref{r_B_coeffs} - \ref{r_A_32_coeffs}. The coefficients are ordered with degree increasing downwards. The coefficients are meant to be stored in double precision format. Therefore they are given with 17 significant decimal digits which is enough to get the correctly rounded double precision numbers.
\begin{align}
\label{value_of_x0}
x_0 &= 11.899848152108484 \\
\label{vaklue_of_x1}
x_1 &= 28.989337738820740.
\end{align}
\begin{table}[H]
\caption{Rational minimax coefficients for $F_0$ on $[x_0,x_1\rangle$}
\label{r_B_coeffs}
\begin{ruledtabular}
\begin{tabular}{S[table-format=+1.16e+2]S[table-format=+1.16e+2]}
{Numerator} & {Denominator} \\
\hline
 5.74537531702047552E+07 &  4.79893571439451030E+07 \\
 2.73330925890901898E+06 &  3.04808499107506708E+07 \\
 7.52922255805293133E+04 & -1.66693114610725015E+06 \\
 2.33846894861346960E+05 &  5.63505368535215625E+05 \\
 8.34841284469484906E+03 &  6.39702496081641495E+04 \\
 3.90892739018191431E+01 &  8.53693546919731980E+02 \\
  & 1.00000000000000000E+00
\end{tabular}
\end{ruledtabular}
\end{table}
\begin{table}[H]
\caption{Rational minimax coefficients for $F_{0}$ on $[0,x_0\rangle$}
\label{r_A_0_coeffs}
\begin{ruledtabular}
\begin{tabular}{S[table-format=+1.16e+2]S[table-format=+1.16e+2]}
{Numerator} & {Denominator} \\
\hline
 4.59649054199586751E+11 &  4.59649054199579770E+11 \\
 7.24610171100856232E+10 &  2.25677368510488844E+11 \\
 2.24977231104248461E+10 &  5.17586071870896154E+10 \\
 1.62899741137514774E+09 &  7.25815475661893057E+09 \\
 1.91702978974343428E+08 &  6.80492889773299134E+08 \\
 6.56389165108291995E+06 &  4.33436553747085297E+07 \\
 3.22527508970295511E+05 &  1.77090545597099048E+06 \\
 &  3.59362735209789862E+04 \\
 & -2.11809634725166180E+02 \\
 &  1.00000000000000000E+00   
\end{tabular}
\end{ruledtabular}
\end{table}
\begin{table}[H]
\caption{Rational minimax coefficients for $F_{1}$ on $[0,x_0\rangle$}
\label{r_A_1_coeffs}
\begin{ruledtabular}
\begin{tabular}{S[table-format=+1.16e+2]S[table-format=+1.16e+2]}
{Numerator} & {Denominator} \\
\hline
-4.65157653173317170E+11 & -1.39547295952001886E+12 \\
 2.29425320006178902E+10 & -7.68456179705352370E+11 \\
-1.29857712372204999E+10 & -2.01001101693493424E+11 \\
 1.78844602696723749E+08 & -3.29212211155791438E+10 \\
-7.40895343861489278E+07 & -3.73769996691396548E+09 \\
-8.83205562809530090E+04 & -3.06209737336929359E+08 \\
-9.88905350089899030E+04 & -1.81100571529961951E+07 \\
 & -7.35772618617600437E+05 \\
 & -1.73370711526267371E+04 \\
 & -3.01644420112301709E+01 \\
 &  1.00000000000000000E+00   
\end{tabular}
\end{ruledtabular}
\end{table}
\begin{table}[H]
\caption{Rational minimax coefficients for $F_{2}$ on $[0,x_0\rangle$}
\label{r_A_2_coeffs}
\begin{ruledtabular}
\begin{tabular}{S[table-format=+1.16e+2]S[table-format=+1.16e+2]}
{Numerator} & {Denominator} \\
\hline
-3.21534353039794617E+11 & -1.60767176519862195E+12 \\
 3.54003705695069524E+10 & -9.71335122319310198E+11 \\
-7.01635383055901375E+09 & -2.82317080188832696E+11 \\
 2.79078906677022317E+08 & -5.22376678369643476E+10 \\
-2.78149387526899752E+07 & -6.85230379697842854E+09 \\
 2.50304977467284799E+05 & -6.69937093077230221E+08 \\
-2.61245797045770042E+04 & -4.97459753170508105E+07 \\
 & -2.78418670876310491E+06 \\
 & -1.11322872042882201E+05 \\
 & -2.82622020347674619E+03 \\
 &  1.00000000000000000E+00   
\end{tabular}
\end{ruledtabular}
\end{table}
\begin{table}[H]
\caption{Rational minimax coefficients for $F_{3}$ on $[0,x_0\rangle$}
\label{r_A_3_coeffs}
\begin{ruledtabular}
\begin{tabular}{S[table-format=+1.16e+2]S[table-format=+1.16e+2]}
{Numerator} & {Denominator} \\
\hline
 1.74242490762361812E+12 &  1.21969743533675925E+13 \\
-2.31739940313066278E+11 &  7.86435602580557761E+12 \\
 3.01841796858242589E+10 &  2.44715513739141483E+12 \\
-1.26158491469042445E+09 &  4.86816942362079024E+11 \\
 6.24293777436041829E+07 &  6.90443144069857960E+10 \\
 &  7.35560133650822347E+09 \\
 &  6.01952300371660024E+08 \\
 &  3.77439507311669504E+07 \\
 &  1.75556856301654242E+06 \\
 &  5.15451071696163987E+04 \\
 &  7.51530879218449388E+02 \\
 & -6.90725843407910436E+01 \\
 &  1.00000000000000000E+00   
\end{tabular}
\end{ruledtabular}
\end{table}
\begin{table}[H]
\caption{Rational minimax coefficients for $F_{4}$ on $[0,x_0\rangle$}
\label{r_A_4_coeffs}
\begin{ruledtabular}
\begin{tabular}{S[table-format=+1.16e+2]S[table-format=+1.16e+2]}
{Numerator} & {Denominator} \\
\hline
 1.07640450297782221E+08 &  9.68764052679927282E+08 \\
-2.55458353459970974E+07 &  5.62712615905041359E+08 \\
 3.52812080853076687E+06 &  1.56812915525206930E+08 \\
-2.95422780643995305E+05 &  2.77339407473815158E+07 \\
 1.70289049867735511E+04 &  3.46473761130405637E+06 \\
-6.62342704137285632E+02 &  3.21197484344255914E+05 \\
 1.70870241353972667E+01 &  2.24921989775218534E+04 \\
-2.64791558310288340E-01 &  1.17843724039343395E+03 \\
 1.88027971847219425E-03 &  4.36972585953520361E+01 \\
 &  1.00000000000000000E+00   
\end{tabular}
\end{ruledtabular}
\end{table}
\begin{table}[H]
\caption{Rational minimax coefficients for $F_{5}$ on $[0,x_0\rangle$}
\label{r_A_5_coeffs}
\begin{ruledtabular}
\begin{tabular}{S[table-format=+1.16e+2]S[table-format=+1.16e+2]}
{Numerator} & {Denominator} \\
\hline
-3.63350182727758466E+12 & -3.99685201000472987E+13 \\
 5.09119767488377891E+11 & -2.82191995658341951E+13 \\
-4.66070769596744520E+10 & -9.73533805198387887E+12 \\
 1.82540967399093927E+09 & -2.18080501665434778E+12 \\
-5.30543200345946936E+07 & -3.55350240246925148E+11 \\
 & -4.46814329074875211E+10 \\
 & -4.48324401716056338E+09 \\
 & -3.65555177659264031E+08 \\
 & -2.43548935295006352E+07 \\
 & -1.31837759196109729E+06 \\
 & -5.61522957956144616E+04 \\
 & -1.72623902086203936E+03 \\
 & -3.61093692542213220E+01 \\
 &  1.00000000000000000E+00   
\end{tabular}
\end{ruledtabular}
\end{table}
\begin{table}[H]
\caption{Rational minimax coefficients for $F_{6}$ on $[0,x_0\rangle$}
\label{r_A_6_coeffs}
\begin{ruledtabular}
\begin{tabular}{S[table-format=+1.16e+2]S[table-format=+1.16e+2]}
{Numerator} & {Denominator} \\
\hline
 6.32362964316323446E+07 &  8.22071853610988647E+08 \\
-1.78304485335124420E+07 &  4.80666442210646271E+08 \\
 2.51567753132107927E+06 &  1.34959799873156815E+08 \\
-2.19143264882283100E+05 &  2.40771063816958245E+07 \\
 1.27344879285248770E+04 &  3.03870085803678862E+06 \\
-5.00986429135037998E+02 &  2.85166544683446997E+05 \\
 1.29601270953674587E+01 &  2.02723823300825666E+04 \\
-2.00754947326145896E-01 &  1.08337493002115488E+03 \\
 1.42097370844837178E-03 &  4.12117791883748035E+01 \\
 &  1.00000000000000000E+00   
\end{tabular}
\end{ruledtabular}
\end{table}
\begin{table}[H]
\caption{Rational minimax coefficients for $F_{7}$ on $[0,x_0\rangle$}
\label{r_A_7_coeffs}
\begin{ruledtabular}
\begin{tabular}{S[table-format=+1.16e+2]S[table-format=+1.16e+2]}
{Numerator} & {Denominator} \\
\hline
 1.75909119461509143E+11 &  2.63863679192234972E+12 \\
-2.31397098271936913E+10 &  1.98111328666843863E+12 \\
 1.68875828571559918E+09 &  7.31805354458430146E+11 \\
-5.64452771442680053E+07 &  1.76968955184222011E+11 \\
 1.17407664422063718E+06 &  3.14408799088726826E+10 \\
 &  4.36407011112015921E+09 \\
 &  4.91056122401636080E+08 \\
 &  4.58523651661037259E+07 \\
 &  3.59865965982106407E+06 \\
 &  2.40434947023941125E+05 \\
 &  1.32823525115988495E+04 \\
 &  6.68417454613384130E+02 \\
 &  2.04252475843400746E+01 \\
 &  1.00000000000000000E+00   
\end{tabular}
\end{ruledtabular}
\end{table}
\begin{table}[H]
\caption{Rational minimax coefficients for $F_{8}$ on $[0,x_0\rangle$}
\label{r_A_8_coeffs}
\begin{ruledtabular}
\begin{tabular}{S[table-format=+1.16e+2]S[table-format=+1.16e+2]}
{Numerator} & {Denominator} \\
\hline
 4.27633852210096255E+07 &  7.26977548756867415E+08 \\
-1.31313288604750197E+07 &  4.27221005649961331E+08 \\
 1.92047093279423654E+06 &  1.20645561717991569E+08 \\
-1.71296882686810612E+05 &  2.16664004023517011E+07 \\
 1.01020189016332066E+04 &  2.75570803292488760E+06 \\
-4.01157829572622705E+02 &  2.61012457722551099E+05 \\
 1.04348508576747219E+01 &  1.87669023892614156E+04 \\
-1.62054987695090529E-01 &  1.01792753889118730E+03 \\
 1.14734845393421036E-03 &  3.94513904309799494E+01 \\
 &  1.00000000000000000E+00   
\end{tabular}
\end{ruledtabular}
\end{table}
\begin{table}[H]
\caption{Rational minimax coefficients for $F_{9}$ on $[0,x_0\rangle$}
\label{r_A_9_coeffs}
\begin{ruledtabular}
\begin{tabular}{S[table-format=+1.16e+2]S[table-format=+1.16e+2]}
{Numerator} & {Denominator} \\
\hline
 3.66090812345166521E+07 &  6.95572543455501572E+08 \\
-1.15680315739775994E+07 &  4.09534939434717099E+08 \\
 1.71937185342050874E+06 &  1.15897974071192602E+08 \\
-1.55021217572956428E+05 &  2.08647896181599307E+07 \\
 9.21144155752769637E+03 &  2.66133029522484206E+06 \\
-3.67768247155539817E+02 &  2.52929651302264212E+05 \\
 9.60302999038807409E+00 &  1.82610915753483673E+04 \\
-1.49531416596628179E-01 &  9.95830569569992250E+02 \\
 1.06048517076778994E-03 &  3.88529933329081858E+01 \\
 &  1.00000000000000000E+00   
\end{tabular}
\end{ruledtabular}
\end{table}
\begin{table}[H]
\caption{Rational minimax coefficients for $F_{10}$ on $[0,x_0\rangle$}
\label{r_A_10_coeffs}
\begin{ruledtabular}
\begin{tabular}{S[table-format=+1.16e+2]S[table-format=+1.16e+2]}
{Numerator} & {Denominator} \\
\hline
 3.18727019529535358E+07 &  6.69326741011720691E+08 \\
-1.03030212432610360E+07 &  3.94760969620761552E+08 \\
 1.55322739375449943E+06 &  1.11934472548745699E+08 \\
-1.41423769641528486E+05 &  2.01961126182670577E+07 \\
 8.46337556751722184E+03 &  2.58269289943073100E+06 \\
-3.39673496567585546E+02 &  2.46205845498650818E+05 \\
 8.90365945622176397E+00 &  1.78414280943905329E+04 \\
-1.39028066499372714E-01 &  9.77560143690833819E+02 \\
 9.87898143632479994E-04 &  3.83650317003543676E+01 \\
 &  1.00000000000000000E+00   
\end{tabular}
\end{ruledtabular}
\end{table}
\begin{table}[H]
\caption{Rational minimax coefficients for $F_{11}$ on $[0,x_0\rangle$}
\label{r_A_11_coeffs}
\begin{ruledtabular}
\begin{tabular}{S[table-format=+1.16e+2]S[table-format=+1.16e+2]}
{Numerator} & {Denominator} \\
\hline
 2.81728640639121210E+07 &  6.47975873469703400E+08 \\
-9.27753783966489735E+06 &  3.82754433300979251E+08 \\
 1.41622089785473823E+06 &  1.08717435139479832E+08 \\
-1.30119646526112459E+05 &  1.96542366007654340E+07 \\
 7.83983535586110006E+03 &  2.51910102226997192E+06 \\
-3.16281224293270705E+02 &  2.40783937925615525E+05 \\
 8.32354807569940454E+00 &  1.75044646089453221E+04 \\
-1.30365690643806388E-01 &  9.62973033666935387E+02 \\
 9.28460701879997724E-04 &  3.79830794453759681E+01 \\
 &  1.00000000000000000E+00   
\end{tabular}
\end{ruledtabular}
\end{table}
\begin{table}[H]
\caption{Rational minimax coefficients for $F_{12}$ on $[0,x_0\rangle$}
\label{r_A_12_coeffs}
\begin{ruledtabular}
\begin{tabular}{S[table-format=+1.16e+2]S[table-format=+1.16e+2]}
{Numerator} & {Denominator} \\
\hline
 2.53544601714217041E+07 &  6.33861504285287492E+08 \\
-8.48258198675025220E+06 &  3.74844250615203673E+08 \\
 1.30979027926005474E+06 &  1.06606600921230004E+08 \\
-1.21399979536359709E+05 &  1.93004584684968576E+07 \\
 7.36540787890653295E+03 &  2.47784305031495735E+06 \\
-2.98819753533120450E+02 &  2.37294882532379883E+05 \\
 7.90065371972972126E+00 &  1.72900919399835697E+04 \\
-1.24223182966353277E-01 &  9.53839898338405204E+02 \\
 8.87602463343472957E-04 &  3.77540526654878690E+01 \\
 &  1.00000000000000000E+00   
\end{tabular}
\end{ruledtabular}
\end{table}
\begin{table}[H]
\caption{Rational minimax coefficients for $F_{13}$ on $[0,x_0\rangle$}
\label{r_A_13_coeffs}
\begin{ruledtabular}
\begin{tabular}{S[table-format=+1.16e+2]S[table-format=+1.16e+2]}
{Numerator} & {Denominator} \\
\hline
 2.30819576500698694E+07 &  6.23212856551669737E+08 \\
-7.82628187112665194E+06 &  3.68923049044818432E+08 \\
 1.22075808419529548E+06 &  1.05041400991127738E+08 \\
-1.14056644748954035E+05 &  1.90411719871711305E+07 \\
 6.96500421338905853E+03 &  2.44805073752281911E+06 \\
-2.84102269345918437E+02 &  2.34824627705903913E+05 \\
 7.54567809731318183E+00 &  1.71426120081067911E+04 \\
-1.19100077239224798E-01 &  9.47807814143492262E+02 \\
 8.53805650125360363E-04 &  3.76219983903593232E+01 \\
 &  1.00000000000000000E+00   
\end{tabular}
\end{ruledtabular}
\end{table}
\begin{table}[H]
\caption{Rational minimax coefficients for $F_{14}$ on $[0,x_0\rangle$}
\label{r_A_14_coeffs}
\begin{ruledtabular}
\begin{tabular}{S[table-format=+1.16e+2]S[table-format=+1.16e+2]}
{Numerator} & {Denominator} \\
\hline
 2.12458334186947741E+07 &  6.16129169141965232E+08 \\
-7.28795130204511162E+06 &  3.65028312420485399E+08 \\
 1.14728519794468474E+06 &  1.04025946459673529E+08 \\
-1.08001312706828555E+05 &  1.88758341061380906E+07 \\
 6.63702808514035845E+03 &  2.42947540498253127E+06 \\
-2.72194232447144952E+02 &  2.33331119143442641E+05 \\
 7.26358339711147027E+00 &  1.70575049731055860E+04 \\
-1.15125375121206069E-01 &  9.44573172805824440E+02 \\
 8.28372975875056273E-04 &  3.75688900688652846E+01 \\
 &  1.00000000000000000E+00   
\end{tabular}
\end{ruledtabular}
\end{table}
\begin{table}[H]
\caption{Rational minimax coefficients for $F_{15}$ on $[0,x_0\rangle$}
\label{r_A_15_coeffs}
\begin{ruledtabular}
\begin{tabular}{S[table-format=+1.16e+2]S[table-format=+1.16e+2]}
{Numerator} & {Denominator} \\
\hline
 1.97951920905537712E+07 &  6.13650954807010558E+08 \\
-6.86144392935271214E+06 &  3.63755226051815399E+08 \\
 1.08945206014270508E+06 &  1.03722759914749749E+08 \\
-1.03307183554948528E+05 &  1.88324083020775319E+07 \\
 6.38895089014890834E+03 &  2.42547911704470786E+06 \\
-2.63500160865368274E+02 &  2.33109185881384500E+05 \\
 7.06742762934637086E+00 &  1.70537146317930562E+04 \\
-1.12538812296089342E-01 &  9.44992042708637598E+02 \\
 8.13253423623244395E-04 &  3.76184079908039309E+01 \\
 &  1.00000000000000000E+00   
\end{tabular}
\end{ruledtabular}
\end{table}
\begin{table}[H]
\caption{Rational minimax coefficients for $F_{16}$ on $[0,x_0\rangle$}
\label{r_A_16_coeffs}
\begin{ruledtabular}
\begin{tabular}{S[table-format=+1.16e+2]S[table-format=+1.16e+2]}
{Numerator} & {Denominator} \\
\hline
 1.86192784303380985E+07 &  6.14436188201034918E+08 \\
-6.51388986576138143E+06 &  3.64367183315307283E+08 \\
 1.04240529459482231E+06 &  1.03940248825357695E+08 \\
-9.95208121677860958E+04 &  1.88798090295968553E+07 \\
 6.19202142390343984E+03 &  2.43258885879432064E+06 \\
-2.56771026974047403E+02 &  2.33882192253606616E+05 \\
 6.92127832841267607E+00 &  1.71157875559842136E+04 \\
-1.10719326466696887E-01 &  9.48495640903491750E+02 \\
 8.03538741614490102E-04 &  3.77642691208863824E+01 \\
 &  1.00000000000000000E+00   
\end{tabular}
\end{ruledtabular}
\end{table}
\begin{table}[H]
\caption{Rational minimax coefficients for $F_{17}$ on $[0,x_0\rangle$}
\label{r_A_17_coeffs}
\begin{ruledtabular}
\begin{tabular}{S[table-format=+1.16e+2]S[table-format=+1.16e+2]}
{Numerator} & {Denominator} \\
\hline
 1.76287634789397660E+07 &  6.17006721762789511E+08 \\
-6.21942096176012145E+06 &  3.65975273419925384E+08 \\
 1.00259907210026151E+06 &  1.04421803079143984E+08 \\
-9.63482365065391979E+04 &  1.89710288645764397E+07 \\
 6.03043257975913718E+03 &  2.44473111450708740E+06 \\
-2.51454186523567315E+02 &  2.35070543543347768E+05 \\
 6.81322696418388913E+00 &  1.72022767143530895E+04 \\
-1.09530227762568917E-01 &  9.52970348100220882E+02 \\
 7.98685174729464649E-04 &  3.79263609084777770E+01 \\
 &  1.00000000000000000E+00   
\end{tabular}
\end{ruledtabular}
\end{table}
\begin{table}[H]
\caption{Rational minimax coefficients for $F_{18}$ on $[0,x_0\rangle$}
\label{r_A_18_coeffs}
\begin{ruledtabular}
\begin{tabular}{S[table-format=+1.16e+2]S[table-format=+1.16e+2]}
{Numerator} & {Denominator} \\
\hline
 1.10053495649937993E+06 &  4.07197933905399874E+07 \\
-4.15290284510376398E+05 &  2.32658583260025049E+07 \\
 7.16040579546255159E+04 &  6.34852227957287156E+06 \\
-7.36091152895557778E+03 &  1.09225406973565756E+06 \\
 4.92944440283867767E+02 &  1.31450200622355534E+05 \\
-2.19946789490235600E+01 &  1.15572307375223640E+04 \\
 6.37661526509535572E-01 &  7.47423567664348937E+02 \\
-1.09650093594191108E-02 &  3.40341812575986739E+01 \\
 8.54702189691105590E-05 &  1.00000000000000000E+00   
\end{tabular}
\end{ruledtabular}
\end{table}
\begin{table}[H]
\caption{Rational minimax coefficients for $F_{19}$ on $[0,x_0\rangle$}
\label{r_A_19_coeffs}
\begin{ruledtabular}
\begin{tabular}{S[table-format=+1.16e+2]S[table-format=+1.16e+2]}
{Numerator} & {Denominator} \\
\hline
 1.05789196387343878E+06 &  4.12577865911173575E+07 \\
-4.01973620618696332E+05 &  2.35682404272109034E+07 \\
 6.97546013607269779E+04 &  6.42907318962590525E+06 \\
-7.21444241392353212E+03 &  1.10563143724400098E+06 \\
 4.85960478493897660E+02 &  1.32976727424942939E+05 \\
-2.18064735206264471E+01 &  1.16807305638863958E+04 \\
 6.35745444451493209E-01 &  7.54304476480918715E+02 \\
-1.09927619511029222E-02 &  3.42787973526879301E+01 \\
 8.61606672959964407E-05 &  1.00000000000000000E+00   
\end{tabular}
\end{ruledtabular}
\end{table}
\begin{table}[H]
\caption{Rational minimax coefficients for $F_{20}$ on $[0,x_0\rangle$}
\label{r_A_20_coeffs}
\begin{ruledtabular}
\begin{tabular}{S[table-format=+1.16e+2]S[table-format=+1.16e+2]}
{Numerator} & {Denominator} \\
\hline
 6.47075356638978264E+06 &  2.65300896221719669E+08 \\
-2.11794040363122806E+06 &  1.66125763126255078E+08 \\
 3.09732774375819352E+05 &  5.02387297315930431E+07 \\
-2.62120805979412831E+04 &  9.71995268750334966E+06 \\
 1.38502012427518084E+03 &  1.34171144238381706E+06 \\
-4.56416106277303520E+01 &  1.39219189884935085E+05 \\
 8.67411673772646667E-01 &  1.11001533818694266E+04 \\
-7.32312148535088359E-03 &  6.80698925494075328E+02 \\
 &  3.03328267496977109E+01 \\
 &  1.00000000000000000E+00   
\end{tabular}
\end{ruledtabular}
\end{table}
\begin{table}[H]
\caption{Rational minimax coefficients for $F_{21}$ on $[0,x_0\rangle$}
\label{r_A_21_coeffs}
\begin{ruledtabular}
\begin{tabular}{S[table-format=+1.16e+2]S[table-format=+1.16e+2]}
{Numerator} & {Denominator} \\
\hline
 1.00113806692073362E+06 &  4.30489368776244012E+07 \\
-3.85384394542953371E+05 &  2.45641218262036544E+07 \\
 6.77086190185746604E+04 &  6.69125494568673421E+06 \\
-7.08719057795389421E+03 &  1.14861041840245552E+06 \\
 4.83026628802106735E+02 &  1.37810035550222365E+05 \\
-2.19284174480561886E+01 &  1.20651381624713644E+04 \\
 6.46767482399436452E-01 &  7.75299072818440040E+02 \\
-1.13143654935771093E-02 &  3.49985539072948972E+01 \\
 8.97258365399553749E-05 &  1.00000000000000000E+00   
\end{tabular}
\end{ruledtabular}
\end{table}
\begin{table}[H]
\caption{Rational minimax coefficients for $F_{22}$ on $[0,x_0\rangle$}
\label{r_A_22_coeffs}
\begin{ruledtabular}
\begin{tabular}{S[table-format=+1.16e+2]S[table-format=+1.16e+2]}
{Numerator} & {Denominator} \\
\hline
 9.78413399959775489E+05 &  4.40286029982168180E+07 \\
-3.78882973996717940E+05 &  2.51053115917952085E+07 \\
 6.69529564811272573E+04 &  6.83266786797908781E+06 \\
-7.04831896020322737E+03 &  1.17158690304016054E+06 \\
 4.83136707558805601E+02 &  1.40366065850910693E+05 \\
-2.20607408482679038E+01 &  1.22656099930746168E+04 \\
 6.54514378702913320E-01 &  7.86052153038727682E+02 \\
-1.15190214816880878E-02 &  3.53527986648245085E+01 \\
 9.19134544479325137E-05 &  1.00000000000000000E+00   
\end{tabular}
\end{ruledtabular}
\end{table}
\begin{table}[H]
\caption{Rational minimax coefficients for $F_{23}$ on $[0,x_0\rangle$}
\label{r_A_23_coeffs}
\begin{ruledtabular}
\begin{tabular}{S[table-format=+1.16e+2]S[table-format=+1.16e+2]}
{Numerator} & {Denominator} \\
\hline
 9.66613236589941587E+05 &  4.54308221197475695E+07 \\
-3.76591537940241924E+05 &  2.58767005646896380E+07 \\
 6.69490245054781034E+04 &  7.03330268657338671E+06 \\
-7.09048327100888247E+03 &  1.20401692994392062E+06 \\
 4.89000278904994944E+02 &  1.43952347762249175E+05 \\
-2.24678080217337064E+01 &  1.25448930369473603E+04 \\
 6.70852908877587124E-01 &  8.00903396209939783E+02 \\
-1.18840290427715872E-02 &  3.58343785730047088E+01 \\
 9.54640048843181125E-05 &  1.00000000000000000E+00   
\end{tabular}
\end{ruledtabular}
\end{table}
\begin{table}[H]
\caption{Rational minimax coefficients for $F_{24}$ on $[0,x_0\rangle$}
\label{r_A_24_coeffs}
\begin{ruledtabular}
\begin{tabular}{S[table-format=+1.16e+2]S[table-format=+1.16e+2]}
{Numerator} & {Denominator} \\
\hline
 9.46417970705439261E+05 &  4.63744805645838129E+07 \\
-3.70615455935501387E+05 &  2.63957161413426991E+07 \\
 6.62217473689567327E+04 &  7.16819577499412023E+06 \\
-7.04931569194996637E+03 &  1.22578789705691770E+06 \\
 4.88689126513939576E+02 &  1.46353578064983032E+05 \\
-2.25733788136891364E+01 &  1.27310545374895777E+04 \\
 6.77724133763032956E-01 &  8.10731450762346773E+02 \\
-1.20745042392986278E-02 &  3.61468162397566562E+01 \\
 9.75714588013979498E-05 &  1.00000000000000000E+00   
\end{tabular}
\end{ruledtabular}
\end{table}
\begin{table}[H]
\caption{Rational minimax coefficients for $F_{25}$ on $[0,x_0\rangle$}
\label{r_A_25_coeffs}
\begin{ruledtabular}
\begin{tabular}{S[table-format=+1.16e+2]S[table-format=+1.16e+2]}
{Numerator} & {Denominator} \\
\hline
 9.38339083663866324E+05 &  4.78552932668707994E+07 \\
-3.69485294562541889E+05 &  2.72056831197545568E+07 \\
 6.63890042488980107E+04 &  7.37743864947674801E+06 \\
-7.10752303361419887E+03 &  1.25933265029253328E+06 \\
 4.95630940996386078E+02 &  1.50025416335256253E+05 \\
-2.30343254006395079E+01 &  1.30132038859781581E+04 \\
 6.95983183680208910E-01 &  8.25467951324447237E+02 \\
-1.24825859151127786E-02 &  3.66070109940675643E+01 \\
 1.01571653489208039E-04 &  1.00000000000000000E+00   
\end{tabular}
\end{ruledtabular}
\end{table}
\begin{table}[H]
\caption{Rational minimax coefficients for $F_{26}$ on $[0,x_0\rangle$}
\label{r_A_26_coeffs}
\begin{ruledtabular}
\begin{tabular}{S[table-format=+1.16e+2]S[table-format=+1.16e+2]}
{Numerator} & {Denominator} \\
\hline
 4.10714903877451297E+05 &  2.17678899055620140E+07 \\
-1.48284967299858010E+05 &  1.31172270011494235E+07 \\
 2.39347004924575201E+04 &  3.78861677215369447E+06 \\
-2.23771329927588588E+03 &  6.92924225284179908E+05 \\
 1.30765178330923992E+02 &  8.91172920829389735E+04 \\
-4.77083653277736153E+00 &  8.42912591432377850E+03 \\
 1.00472662976727574E-01 &  5.91980063956890030E+02 \\
-9.40521491916329223E-04 &  2.95952627591546617E+01 \\
 &  1.00000000000000000E+00   
\end{tabular}
\end{ruledtabular}
\end{table}
\begin{table}[H]
\caption{Rational minimax coefficients for $F_{27}$ on $[0,x_0\rangle$}
\label{r_A_27_coeffs}
\begin{ruledtabular}
\begin{tabular}{S[table-format=+1.16e+2]S[table-format=+1.16e+2]}
{Numerator} & {Denominator} \\
\hline
 4.08900785605349203E+05 &  2.24895432083406879E+07 \\
-1.48431482634856935E+05 &  1.35367048804001308E+07 \\
 2.40923408312081233E+04 &  3.90439736762716113E+06 \\
-2.26555506669706903E+03 &  7.12890261484845715E+05 \\
 1.33199027904202880E+02 &  9.14892435859503544E+04 \\
-4.89079482854629149E+00 &  8.62953599658876938E+03 \\
 1.03694945043714753E-01 &  6.03691417113411215E+02 \\
-9.77589742403062657E-04 &  3.00346183030949037E+01 \\
 &  1.00000000000000000E+00   
\end{tabular}
\end{ruledtabular}
\end{table}
\begin{table}[H]
\caption{Rational minimax coefficients for $F_{28}$ on $[0,x_0\rangle$}
\label{r_A_28_coeffs}
\begin{ruledtabular}
\begin{tabular}{S[table-format=+1.16e+2]S[table-format=+1.16e+2]}
{Numerator} & {Denominator} \\
\hline
 4.04918508607377351E+05 &  2.30803549906603548E+07 \\
-1.47688282537921989E+05 &  1.38797379677888631E+07 \\
 2.40900411595833825E+04 &  3.99892622045677261E+06 \\
-2.27701541273329491E+03 &  7.29154678562026655E+05 \\
 1.34599432591312964E+02 &  9.34152675146817121E+04 \\
-4.97062429847049156E+00 &  8.79149602027818586E+03 \\
 1.06031018404659362E-01 &  6.13092658398733545E+02 \\
-1.00609923918133880E-03 &  3.03809171630904679E+01 \\
 &  1.00000000000000000E+00   
\end{tabular}
\end{ruledtabular}
\end{table}
\begin{table}[H]
\caption{Rational minimax coefficients for $F_{29}$ on $[0,x_0\rangle$}
\label{r_A_29_coeffs}
\begin{ruledtabular}
\begin{tabular}{S[table-format=+1.16e+2]S[table-format=+1.16e+2]}
{Numerator} & {Denominator} \\
\hline
 4.02736747320286231E+05 &  2.37614680919304362E+07 \\
-1.47596908552103088E+05 &  1.42741859569694491E+07 \\
 2.41953671168406541E+04 &  4.10730483171108589E+06 \\
-2.29898080725758501E+03 &  7.47738452979963319E+05 \\
 1.36653819038177139E+02 &  9.56069803134792050E+04 \\
-5.07636744387583698E+00 &  8.97485315497558605E+03 \\
 1.08970343078371462E-01 &  6.23667277941326814E+02 \\
-1.04094686501161739E-03 &  3.07651138887207658E+01 \\
 &  1.00000000000000000E+00   
\end{tabular}
\end{ruledtabular}
\end{table}
\begin{table}[H]
\caption{Rational minimax coefficients for $F_{30}$ on $[0,x_0\rangle$}
\label{r_A_30_coeffs}
\begin{ruledtabular}
\begin{tabular}{S[table-format=+1.16e+2]S[table-format=+1.16e+2]}
{Numerator} & {Denominator} \\
\hline
 3.97919751441660774E+05 &  2.42731048379708781E+07 \\
-1.46422871477932610E+05 &  1.45707349185855753E+07 \\
 2.41040387989907886E+04 &  4.18883065073214594E+06 \\
-2.30043866634744442E+03 &  7.61720137240172586E+05 \\
 1.37381681867259204E+02 &  9.72552553702851800E+04 \\
-5.12892484415127183E+00 &  9.11255555596752242E+03 \\
 1.10687832961615384E-01 &  6.31587400459517944E+02 \\
-1.06341785154454686E-03 &  3.10500825233904551E+01 \\
 &  1.00000000000000000E+00   
\end{tabular}
\end{ruledtabular}
\end{table}
\begin{table}[H]
\caption{Rational minimax coefficients for $F_{31}$ on $[0,x_0\rangle$}
\label{r_A_31_coeffs}
\begin{ruledtabular}
\begin{tabular}{S[table-format=+1.16e+2]S[table-format=+1.16e+2]}
{Numerator} & {Denominator} \\
\hline
 3.91798219424327906E+05 &  2.46832878237592700E+07 \\
-1.44678734533971131E+05 &  1.48090417666897102E+07 \\
 2.39038910814725430E+04 &  4.25449476171968713E+06 \\
-2.29005841777242621E+03 &  7.73005948653021406E+05 \\
 1.37314139515175157E+02 &  9.85883385117327141E+04 \\
-5.14842853033226852E+00 &  9.22410511306109881E+03 \\
 1.11618796618234802E-01 &  6.38010373487974454E+02 \\
-1.07763143274847864E-03 &  3.12807829393761209E+01 \\
 &  1.00000000000000000E+00   
\end{tabular}
\end{ruledtabular}
\end{table}
\begin{table}[H]
\caption{Rational minimax coefficients for $F_{32}$ on $[0,x_0\rangle$}
\label{r_A_32_coeffs}
\begin{ruledtabular}
\begin{tabular}{S[table-format=+1.16e+2]S[table-format=+1.16e+2]}
{Numerator} & {Denominator} \\
\hline
 3.83366575753585584E+05 &  2.49188274240078582E+07 \\
-1.41962912506031840E+05 &  1.49473925142764407E+07 \\
 2.35226110455187173E+04 &  4.29304258040583941E+06 \\
-2.26023770325155337E+03 &  7.79706453088844968E+05 \\
 1.35947943805136022E+02 &  9.93889542962424737E+04 \\
-5.11394118864555818E+00 &  9.29189196777588754E+03 \\
 1.11257488030836662E-01 &  6.41960930347546819E+02 \\
-1.07813149210223792E-03 &  3.14246148951811622E+01 \\
 &  1.00000000000000000E+00   
\end{tabular}
\end{ruledtabular}
\end{table}

\end{document}